\newtheorem{thm}{Theorem}[section]
\newtheorem{lem}[thm]{Lemma}
\newtheorem{prop}[thm]{Proposition}
\newtheorem{cor}[thm]{Corollary}
\newtheorem*{thmA}{Theorem A}
\newtheorem*{propB}{Proposition B}
\newtheorem*{GGC}{Greenberg's generalized conjecture}
\theoremstyle{definition}
\newtheorem{definition}[thm]{Definition}
\newtheorem{example}[thm]{Example}
\theoremstyle{remark}
\newtheorem{rem}[thm]{Remark}
\newcommand\Gal{\mathrm{Gal}}
\newcommand\zprank{\mathop{\mathrm{rank}_{\mathbb{Z}_p}}}
\begin{document}

\title{A remark on Greenberg's generalized conjecture for imaginary 
$S_3$-extensions of $\mathbb{Q}$}
\footnote[0]{2020 Mathematics Subject Classification. 11R23}
\footnote[0]{Key Words : Greenberg's generalized conjecture, $\mathbb{Z}_p^{\oplus d}$-extensions.}
\author{Tsuyoshi Itoh}

\begin{abstract}
Let $K/ \mathbb{Q}$ be an imaginary $S_3$-extension, 
and $p$ a prime number which splits into exactly three primes in $K$.
We give a sufficient condition 
for the validity of Greenberg's generalized conjecture for $K$ and $p$.
\end{abstract}

\maketitle

\section{Introduction}\label{Introduction}

Let $p$ be a prime number.
We recall the statement of Greenberg's generalized conjecture (GGC) 
for an algebraic number field $K_\circ$ and $p$.
We denote by $\widetilde{K_\circ}$ the composite of all 
$\mathbb{Z}_p$-extensions of $K_\circ$.
Then, the Galois group $\Gal (\widetilde{K_\circ}/ K_\circ)$ 
is topologically isomorphic to $\mathbb{Z}_p^{\oplus d}$ with some positive integer $d$.
Let $L (\widetilde{K_\circ})/ \widetilde{K_\circ}$ be the 
maximal unramified abelian pro-$p$ extension.
We put $X (\widetilde{K_\circ}) = 
\Gal (L (\widetilde{K_\circ})/ \widetilde{K_\circ})$. 
We denote by $\Lambda_{\Gal (\widetilde{K_\circ}/ K_\circ)}$ the 
completed group ring $\mathbb{Z}_p [[ \Gal (\widetilde{K_\circ}/ K_\circ)]]$.
It is well known that $X (\widetilde{K_\circ})$ is a 
finitely generated torsion module over 
$\Lambda_{\Gal (\widetilde{K_\circ}/ K_\circ)}$ (see \cite[Theorem 1]{Gre73}).

\begin{GGC}[{\cite[Conjecture (3.5)]{Gre01}}]
$X (\widetilde{K_\circ})$ is a pseudo-null 
$\Lambda_{\Gal (\widetilde{K_\circ}/K_\circ)}$-module. 
That is, there are two relatively prime elements of 
$\Lambda_{\Gal (\widetilde{K_\circ}/K_\circ)}$ such that 
both annihilate $X (\widetilde{K_\circ})$.
\end{GGC}

Minardi \cite{Min} studied GGC, mainly for imaginary quadratic fields.
After that, many authors gave sufficient conditions for the validity of GGC 
in various situations.
For example, see the author \cite{I2011}, Fujii \cite{Fujii}, Kleine \cite{Kle}, 
Kataoka \cite{Kata}, Takahashi \cite{Taka}.
In \cite[Remark 1.3]{Taka}, several known results are stated in detail.
See also Assim-Boughadi \cite{A-B} and the results referred there.
We also mention Murakami \cite{Mura} as a recent result.

In the present paper, we consider GGC for the following $K$ and $p$.
Let $K/\mathbb{Q}$ be an imaginary  
Galois extension whose Galois group is isomorphic to the 
symmetric group $S_3$ of degree $3$.
We assume that $p$ splits into three primes $\mathfrak{P}_1$, $\mathfrak{P}_2$, $\mathfrak{P}_3$ in $K/\mathbb{Q}$.

There is a unique imaginary quadratic field $k$ contained in $K$.
In our situation, $p$ is not decomposed in $k$, and the unique prime of 
$k$ lying above $p$ is completely decomposed in $K$.
For each $i \in \{ 1,2,3 \}$, let 
$K_{\mathfrak{P_i}}$ be the completion of $K$ 
at $\mathfrak{P}_i$.
Then $[ K_{\mathfrak{P}_i} : \mathbb{Q}_p ] =2$ for every $i$.

The decomposition field $F$ of $K/\mathbb{Q}$ for 
$\mathfrak{P}_1$ is a complex cubic field.
We denote by $\mathfrak{p}$ the prime of $F$ lying below $\mathfrak{P}_1$.
There is just one more prime $\mathfrak{p}^*$ of $F$ lying above $p$, which splits 
into two primes $\mathfrak{P}_2$, $\mathfrak{P}_3$ in $K/F$.
We note that $[ F_{\mathfrak{p}} : \mathbb{Q}_p ] =1$ 
and $[ F_{\mathfrak{p}^*} : \mathbb{Q}_p ] =2$, 
where $F_{\mathfrak{p}}$  
(resp.~$F_{\mathfrak{p}^*}$) is the completion of $F$ 
at $\mathfrak{p}$ (resp.~$\mathfrak{p}^*$).
It is known that there exists a unique $\mathbb{Z}_p$-extension
$N^* /F$ unramified outside $\mathfrak{p}^*$ (see \cite[Lemma 3.4 (2)]{Kata}).

Our main result is the following:

\begin{thm}\label{main_thm}
Let the notation be as above.
Moreover, 
let $M_{\mathfrak{P}_3} (K)/K$ be the maximal abelian pro-$p$ extension 
unramified outside $\mathfrak{P}_3$.
Assume that all of the following conditions are satisfied:\\
(C1) $\mathfrak{p}$ is finitely decomposed in $N^*$, \\
(C2) $\mathfrak{p}^*$ is not decomposed in $N^*$, and \\
(C3) $\mathfrak{P}_2$ is not decomposed in $M_{\mathfrak{P}_3} (K)$.\\
Then, GGC for $K$ and $p$ holds.
\end{thm}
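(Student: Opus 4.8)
The plan is to prove Greenberg's generalized conjecture for $K$ by descending along the action of $\Gal(K/\mathbb{Q}) \cong S_3$ to the imaginary quadratic field $k$ and the complex cubic field $F$, where the relevant Iwasawa modules live over only two-variable algebras. First I would record that Leopoldt's conjecture holds for $K$: since $K/k$ is cyclic of degree $3$ and $k$ is imaginary quadratic, this follows from Brumer's theorem, and hence $\zprank \Gal(\widetilde{K}/K) = r_2(K) + 1 = 4$. For $p \geq 5$ the idempotents of $\mathbb{Z}_p[S_3]$ decompose $X(\widetilde{K})$ into its trivial, sign, and standard ($2$-dimensional) isotypic components, and pseudo-nullity over $\Lambda_{\Gal(\widetilde{K}/K)}$ may be tested on each. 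The trivial component descends from the cyclotomic $\mathbb{Z}_p$-extension of $\mathbb{Q}$ and vanishes; the sign component is governed by the anticyclotomic Iwasawa theory of $k$, where $p$ is non-split, so Minardi's result gives pseudo-nullity; the whole difficulty is concentrated in the standard component, which is governed by $F$.

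For the mechanism I would use the fundamental exact sequence of class field theory over $\widetilde{K}$. Writing $\mathfrak{X}$ for the Galois group of the maximal abelian pro-$p$ extension of $\widetilde{K}$ unramified outside $p$, one has a surjection $\mathfrak{X} \twoheadrightarrow X(\widetilde{K})$ whose kernel is generated by the images of the inertia subgroups $I_w$ of the primes $w$ above $p$, giving $\bigoplus_{w \mid p} I_w \to \mathfrak{X} \to X(\widetilde{K}) \to 0$. By Iwasawa's weak Leopoldt theorem for the maximal multiple $\mathbb{Z}_p$-extension, $\mathfrak{X}$ has no nonzero pseudo-null submodule and is of $\Lambda$-rank $r_2(K) = 3$; consequently, in each isotypic component the quotient $X(\widetilde{K})$ has support of codimension $\geq 2$ exactly when the local decomposition subgroups attached to the $\mathfrak{P}_i$ are of maximal $\mathbb{Z}_p$-rank, i.e.\ when the corresponding primes are non-split in the $\mathbb{Z}_p$-directions they must cover. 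Conditions (C1)--(C3) are precisely the inputs that force this.

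Concretely, in the standard component I would descend to $F$, where $\Gal(\widetilde{F}/F) \cong \mathbb{Z}_p^2$ and $p = \mathfrak{p}\,\mathfrak{p}^*$. The direction relevant to $\mathfrak{p}^*$ is carried by $N^*/F$. Condition (C2), that $\mathfrak{p}^*$ is non-decomposed in $N^*$, forces the decomposition subgroup at $\mathfrak{p}^*$ to be the full $\mathbb{Z}_p$, and condition (C1), that $\mathfrak{p}$ is finitely decomposed in $N^*$, keeps the second prime under control; together they supply a local image of full rank and collapse the codimension-one support of the standard component. Because $\mathfrak{p}^*$ splits as $\mathfrak{P}_2 \mathfrak{P}_3$ in $K/F$, lifting this back to $K$ requires distinguishing $\mathfrak{P}_2$ from $\mathfrak{P}_3$; this is the role of $M_{\mathfrak{P}_3}(K)$, and condition (C3), that $\mathfrak{P}_2$ is non-decomposed in $M_{\mathfrak{P}_3}(K)$, guarantees that the decomposition subgroup of $\mathfrak{P}_2$ is not absorbed into that of $\mathfrak{P}_3$, so the two primes contribute independent directions to the local term.

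The step I expect to be the main obstacle is this descent back to $K$: the isotypic splitting is transparent only after inverting $p$, whereas pseudo-nullity is integral, so the decomposition images must be tracked integrally over the four-variable algebra $\Lambda_{\Gal(\widetilde{K}/K)}$, and one must verify that conditions (C1)--(C3)---stated over $F$ and at individual primes of $K$---annihilate \emph{every} height-one prime of the support. The delicate point is that $N^*$ is an extension of $F$, not of $K$, while the arithmetic that matters (the splitting $\mathfrak{p}^* = \mathfrak{P}_2 \mathfrak{P}_3$) happens in $K$; showing that $N^*$ together with $M_{\mathfrak{P}_3}(K)$ already detects enough of the $\mathbb{Z}_p^4$-directions of $\widetilde{K}/K$ to kill all codimension-one support, rather than only part of it, is the crux of the argument. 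Handling the primes $p = 2, 3$, where the idempotent decomposition is unavailable, would require replacing the isotypic reduction by an explicit analysis of the degree-$2$ and degree-$3$ descents.
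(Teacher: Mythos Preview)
Your approach is genuinely different from the paper's and has a structural gap. The paper never descends to $k$ or $F$; it works entirely over $K$, constructing an explicit tower $N^{(1)}\subset N^{(2)}\subset N^{(3)}\subset\widetilde{K}$ of $\mathbb{Z}_p^d$-extensions ($d=1,2,3,4$). First $X(N^{(1)})$ is shown to be finite using Theorem~A (Maire) together with (C2) and (C3); then $X(N^{(2)})$ is shown to be pseudo-null by a Minardi-style coinvariant argument; finally Proposition~B (Minardi's climbing criterion, which needs only that the decomposition groups at the ramified primes have $\mathbb{Z}_p$-rank $\geq 2$ at each stage) is applied twice, the required rank conditions being supplied by Lemma~\ref{decomp_inertia_rank}, Proposition~\ref{key_prop} and Lemma~\ref{N2_decomposition}. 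No $S_3$-idempotents appear, and the proof is uniform in $p$.

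The gap in your plan is that the $S_3$-action on $X(\widetilde{K})$ is only \emph{semi}-linear over $\Lambda=\Lambda_{\Gal(\widetilde{K}/K)}$: conjugation by $g\in S_3$ permutes $\Gal(\widetilde{K}/K)$ nontrivially, so the idempotents $e_\chi\in\mathbb{Z}_p[S_3]$ do not commute with $\Lambda$ and the isotypic pieces $e_\chi X(\widetilde{K})$ are not $\Lambda$-submodules. Thus ``pseudo-nullity over $\Lambda$ may be tested on each'' is unjustified. What the sign and standard pieces actually control are modules over the \emph{two}-variable rings $\Lambda_{\Gal(\widetilde{k}/k)}$ and $\Lambda_{\Gal(\widetilde{F}/F)}$, and height $\geq 2$ there is a much weaker condition than height $\geq 2$ in a four-variable ring; GGC for $k$ and $F$ does not formally imply GGC for $K$. (The paper's Remark~\ref{rem_contained} already flags that even when $K\subset\widetilde{k}$, GGC for $k$ with $p$ non-split is itself non-trivial, so your appeal to ``Minardi's result'' for the sign component is also unfounded.) A secondary error: over the full $\widetilde{K}$, weak Leopoldt says $\mathfrak{X}_{S_p}(\widetilde{K})$ is $\Lambda$-\emph{torsion}; the $\Lambda$-rank $r_2(K)$ formula you quote holds over the cyclotomic $\mathbb{Z}_p$-extension, not over $\widetilde{K}$, so the exact-sequence mechanism you sketch does not have the shape you claim.
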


\begin{rem}\label{rem_C1}
$M_{\mathfrak{P}_3} (K)/K$ is known to be a finite extension 
(see Section \ref{Preliminaries}).
The condition (C1) is equivalent to \cite[Assumption 3.1]{Kata} 
(when the complex cubic field has two primes dividing $p$).
It seems unknown whether (C1) always holds or not 
(see also \cite[Remark 3.2 (2)]{Kata}, \cite[Sections 7.2 and 7.3]{Hachi}).
However, (C1) was confirmed to be satisfied for many cases 
(see Remark \ref{rem_C1_2}).
\end{rem}

\begin{rem} 
Kataoka \cite{Kata} gave sufficient conditions for the validity of 
GGC for complex cubic fields 
(he treated all cases of decomposition on $p$).
Compare our Theorem \ref{main_thm} with \cite[Theorem 3.3 (1), (2)]{Kata}.
In particular, by using \cite[Theorem 3.3 (2)]{Kata}, 
we see that GGC for (our) $F$ and $p$ also holds 
under the assumptions (C1), (C2), (C3). 
To show \cite[Theorem 3.3 (2)]{Kata}, 
Kataoka used $N^*/F$ as the first step of his proof 
(\cite[Proposition 3.5 (2)]{Kata}).  
We use $N^* K/K$ as the first step. 
However, $\widetilde{F}$ is not used in our proof.
\end{rem}

\begin{rem}\label{rem_contained}
Assume that $p=3$.
When $K$ is contained in $\widetilde{k}$, 
the validity of GGC for $k$ and $p$ implies the validity of GGC for $K$ and $p$. 
See \cite[p.33, Remarks (ii)]{Min}.
However, in our case, if $K$ is contained in $\widetilde{k}$, then 
$K/k$ is an unramified extension, hence the validity of GGC for $k$ and $p$ seems non-trivial
(see, e.g., \cite[Section 3.D]{Min}).
Incidentally, for the case where $p=2$, we see that 
$K$ is never contained in $\widetilde{F}$ 
because the real archimedean prime of $F$ ramifies in $K$.
For a somewhat related result, see also \cite[Corollary 4.4]{Kle}.
\end{rem}

We give several preparations in Sections \ref{Preliminaries}.
We shall show Theorem \ref{main_thm} in Section \ref{Proof_main_thm}.
We will give examples in Section \ref{Examples}.

\section{Preliminaries}\label{Preliminaries}
First, we define several notation and recall well known facts.
We denote by $|A|$ the cardinality of a set $A$.
We also denote by $\zprank B$ the $\mathbb{Z}_p$-rank of a finitely generated 
$\mathbb{Z}_p$-module $B$.
For a pro-$p$ group $G$ which is topologically isomorphic to $\mathbb{Z}_p^{\oplus d}$ with some 
positive integer $d$, let $\Lambda_G$ be the the 
completed group ring $\mathbb{Z}_p [[ G ]]$.
We use the notation defined in Section \ref{Introduction}.

In this paragraph, we denote by $\mathcal{K}$ an algebraic extension of $K$.
Let $L (\mathcal{K})/ \mathcal{K}$ be the maximal unramified abelian pro-$p$ extension, 
and put $X (\mathcal{K}) = \Gal (L (\mathcal{K})/ \mathcal{K})$.
We denote by $S_p$ the set $\{ \mathfrak{P}_1, \mathfrak{P}_2, \mathfrak{P}_3\}$.
For a non-empty subset $S$ of $S_p$, let $M_S (\mathcal{K})/ \mathcal{K}$ be 
the maximal abelian pro-$p$ extension unramified outside $S$, and put 
$\mathfrak{X}_S (\mathcal{K}) = \Gal (M_S (\mathcal{K})/ \mathbb{K})$.
For the case where $S = \{ \mathfrak{P}_i \}$,
we shall write them $M_{\mathfrak{P}_i} (\mathcal{K})$, 
$\mathfrak{X}_{\mathfrak{P}_i} (\mathcal{K})$ in short.

For $i \in \{ 1,2,3 \}$, let $\mathcal{U}_i$ be the group of 
principal units in $K_{\mathfrak{P}_i}$, 
and put $\mathcal{U} = \bigoplus_{i=1}^3 \mathcal{U}_i$.
Let $E_K$ be the group of units in $K$, 
and put 
\[ E'_K = \{ \varepsilon \in E_K \; | \; 
\text{$\varepsilon \equiv 1 \pmod{\mathfrak{P}_i}$ for every $i \in \{ 1, 2, 3 \}$} \}. \]
We denote by $\mathcal{E}$ the image of the mapping 
$E'_K \otimes_{\mathbb{Z}} \mathbb{Z}_p 
\to \mathcal{U}$ induced from the diagonal embedding
\[ E'_K \to \mathcal{U}= \mathcal{U}_1 \oplus \mathcal{U}_2 \oplus \mathcal{U}_3, 
\quad \varepsilon \mapsto
(\varepsilon, \varepsilon, \varepsilon) \]
(see, e.g., \cite[Appendix]{IwaU4}).
By class field theory, $\mathcal{U}/ \mathcal{E}$ is isomorphic to 
$\Gal (M_{S_p} (K)/L(K))$.
It is known that Leopoldt's conjecture holds for $K$ and $p$ 
since $K$ is an abelian extension of an imaginary quadratic field 
(see \cite{Bru}).
Then, $\zprank \mathcal{E}$ is $2$, which is equal to the free rank of $E_K$.
Since $\zprank \mathcal{U}$ is $6$, we see that 
$\zprank \Gal (M_{S_p} (K)/L(K))$ is $4$. 
Hence, $\widetilde{K}/K$ is a $\mathbb{Z}_p^{\oplus 4}$-extension.

Next, we shall state several results which will be used in the proof of Theorem \ref{main_thm}.
Recall that $N^*/F$ is the $\mathbb{Z}_p$-extension unramified 
outside $\mathfrak{p}^*$.
We put $N^{(1)} = N^* K$, which is a $\mathbb{Z}_p$-extension of $K$.
The following result is crucial to prove Theorem \ref{main_thm}.

\begin{thmA}[Maire \cite{Mai}]
For a non-negative integer $n$, let $N^{(1)}_n$ be the 
$n$th layer of $N^{(1)}/K$.
Then for every $n$, 
$\mathfrak{X}_{\mathfrak{P}_1} (N^{(1)}_n)$ is finite.
In particular, $\mathfrak{X}_{\mathfrak{P}_1} (K)$ is finite.
\end{thmA}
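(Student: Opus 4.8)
The plan is to translate the finiteness of $\mathfrak{X}_{\mathfrak{P}_1}(N^{(1)}_n)$ into the surjectivity of a $\mathfrak{p}$-adic regulator map, and then to verify that surjectivity representation-theoretically, exploiting that $[F_{\mathfrak{p}}:\mathbb{Q}_p]=1$ and that $F$ has a complex archimedean place. Fix $n$, write $\mathcal{K}=N^{(1)}_n$, and let $S$ be the set of primes of $\mathcal{K}$ lying above $\mathfrak{P}_1$ (equivalently, above $\mathfrak{p}$). Since $\mathfrak{X}_{\mathfrak{P}_1}(\mathcal{K})$ is a finitely generated $\mathbb{Z}_p$-module, it is finite if and only if its $\mathbb{Z}_p$-rank vanishes. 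By class field theory (the same idele-theoretic computation used in Section~\ref{Preliminaries} for $M_{S_p}(K)/L(K)$) there is an exact sequence relating $\mathfrak{X}_{\mathfrak{P}_1}(\mathcal{K})$ to $\bigoplus_{w\in S}\mathcal{U}_w/\overline{E_{\mathcal{K}}}$ with only a finite (class-group) discrepancy, so
\[
\zprank\mathfrak{X}_{\mathfrak{P}_1}(\mathcal{K})=\dim_{\mathbb{Q}_p}\operatorname{coker}\bigl(\mathrm{reg}_{\mathfrak{p}}\bigr),\qquad
\mathrm{reg}_{\mathfrak{p}}\colon E_{\mathcal{K}}\otimes\mathbb{Q}_p\longrightarrow\bigoplus_{w\in S}\mathcal{U}_w\otimes\mathbb{Q}_p .
\]
Thus it suffices to prove that the local regulator $\mathrm{reg}_{\mathfrak{p}}$ is surjective.

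Next I would exploit the Galois action. The field $\mathcal{K}=N^*_nK$ is abelian over $F$, being the compositum of the abelian extensions $K/F$ and $N^*_n/F$; put $G=\Gal(\mathcal{K}/F)$ and decompose everything after $-\otimes\overline{\mathbb{Q}_p}$ into characters $\chi$ of $G$, noting that $\mathrm{reg}_{\mathfrak{p}}$ is $G$-equivariant. Because $F_{\mathfrak{p}}=\mathbb{Q}_p$, the completion of $\mathcal{K}$ at a place above $\mathfrak{p}$ is abelian over $\mathbb{Q}_p$ with Galois group the decomposition group $D$; by the normal basis theorem it is free of rank one over $\overline{\mathbb{Q}_p}[D]$, so inducing from $D$ gives $\bigoplus_{w\in S}\mathcal{U}_w\otimes\overline{\mathbb{Q}_p}\cong\overline{\mathbb{Q}_p}[G]$, the regular representation. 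In particular every $\chi$-component of the target is one-dimensional, and surjectivity of $\mathrm{reg}_{\mathfrak{p}}$ is equivalent to $\mathrm{reg}_{\mathfrak{p}}^{\chi}\neq 0$ for every character $\chi$ of $G$.

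Two inputs then finish the argument. On the source side, the unique complex place of the complex cubic field $F$ splits completely in $\mathcal{K}/F$, so its contribution to the Dirichlet unit representation already contains the regular representation $\overline{\mathbb{Q}_p}[G]$; hence $(E_{\mathcal{K}}\otimes\overline{\mathbb{Q}_p})^{\chi}\neq 0$ for every $\chi$ (the trivial character survives the subtraction of the trivial summand because a real place of $F$ supplies a second copy). On the non-vanishing side, choose a non-torsion $u\in(E_{\mathcal{K}})^{\chi}$ and a place $w_0\in S$: the embedding $\mathcal{K}\hookrightarrow\mathcal{K}_{w_0}$ is injective and the kernel of the local $p$-adic logarithm is the torsion subgroup $\mu(\mathcal{K}_{w_0})$, so $\log_{w_0}(u)\neq 0$; by $G$-equivariance $\mathrm{reg}_{\mathfrak{p}}(u)$ lies in the $\chi$-line and is nonzero, whence $\mathrm{reg}_{\mathfrak{p}}^{\chi}\neq 0$. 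Therefore $\mathrm{reg}_{\mathfrak{p}}$ is surjective, its cokernel vanishes, and $\mathfrak{X}_{\mathfrak{P}_1}(N^{(1)}_n)$ is finite; the case $n=0$ yields the final assertion for $K$.

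I expect the main obstacle to be the bookkeeping in the first step: pinning down the class-field-theoretic exact sequence precisely enough to be certain that the only contribution to the $\mathbb{Z}_p$-rank is $\operatorname{coker}(\mathrm{reg}_{\mathfrak{p}})$, so that the split primes $\mathfrak{P}_2,\mathfrak{P}_3$ and the class group enter only through finite groups; and, closely related, the verification that every character of $G$ occurs in the unit representation, which is exactly where the hypotheses ``$F$ complex cubic'' and ``$[F_{\mathfrak{p}}:\mathbb{Q}_p]=1$'' are genuinely used. By contrast the non-vanishing is soft and needs no Leopoldt- or Baker-type input, precisely because the degree-one prime forces each local character-component to be one-dimensional, reducing matters to a single non-torsion unit. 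A minor separate check is required at $p=2$ for the archimedean contributions, but since $\mathcal{K}$ is totally imaginary this causes no difficulty.
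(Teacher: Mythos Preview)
Your overall architecture is correct and is essentially how Maire proves his Theorem~25: reduce the finiteness of $\mathfrak{X}_{\mathfrak{P}_1}(N^{(1)}_n)$ to the surjectivity of the $\mathfrak{p}$-adic regulator, decompose into characters of $G=\Gal(\mathcal{K}/F)$, observe that the target is the regular representation (this is exactly where $[F_{\mathfrak{p}}:\mathbb{Q}_p]=1$ enters), and check that every character occurs on the unit side (this is where the complex place of $F$ enters). The paper itself does not reprove any of this; it simply verifies the hypotheses of \cite[Theorem~25]{Mai} and cites it as a black box.

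There is, however, a genuine gap in your non-vanishing step, and it is precisely the step you describe as ``soft''. You argue: choose $0\neq u\in(E_{\mathcal{K}}\otimes\overline{\mathbb{Q}_p})^{\chi}$; since $\mathcal{K}\hookrightarrow\mathcal{K}_{w_0}$ is injective and $\ker\log_{w_0}=\mu(\mathcal{K}_{w_0})$, conclude $\log_{w_0}(u)\neq0$. The problem is that $u$ is not a unit but a $\overline{\mathbb{Q}_p}$-linear combination of units. The injection $E_{\mathcal{K}}/\mathrm{tors}\hookrightarrow\mathcal{K}_{w_0}$ is an injection of $\mathbb{Z}$-modules; tensoring with $\overline{\mathbb{Q}_p}$ would keep it injective if the target were $\mathcal{K}_{w_0}\otimes_{\mathbb{Z}}\overline{\mathbb{Q}_p}$, but the target you need is $\mathcal{K}_{w_0}\otimes_{\mathbb{Q}_p}\overline{\mathbb{Q}_p}$, a finite-dimensional $\overline{\mathbb{Q}_p}$-space, and there injectivity can fail (indeed for large $n$ the source has strictly larger dimension than this single local factor). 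Concretely, writing $u=e_{\chi}\varepsilon=|G|^{-1}\sum_{g}\chi(g)^{-1}\,g\varepsilon$ for some $\varepsilon\in E_{\mathcal{K}}$, one has
\[
\log_{w_0}(u)=|G|^{-1}\sum_{g\in G}\chi(g)^{-1}\log_{w_0}(g\varepsilon),
\]
a $\overline{\mathbb{Q}}$-linear combination of $p$-adic logarithms of algebraic numbers. Its non-vanishing is precisely a Baker--Brumer statement: $p$-adic logarithms of multiplicatively independent algebraic numbers are linearly independent over $\overline{\mathbb{Q}}$. This is exactly the transcendence input Maire invokes, and it cannot be bypassed by the one-dimensionality of the local $\chi$-line. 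Once you insert Baker--Brumer at this point, the rest of your argument is fine.
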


\begin{proof}
We recall the following facts:
$N^{(1)}_n / F$ is an abelian extension, 
$\mathfrak{P}_1$ is the unique prime of $K$ lying above $\mathfrak{p}$,  
$[ F_{\mathfrak{p}} : \mathbb{Q}_p ]=1$, and $[F : \mathbb{Q}] =3$.
Then we can apply \cite[Theorem 25]{Mai}, and the assertion follows.
\end{proof}

We can also see that both $\mathfrak{X}_{\mathfrak{P}_2} (K)$ and 
$\mathfrak{X}_{\mathfrak{P}_3} (K)$ are finite.

We note that Hachimori \cite{Hachi} treated similar situation to ours.
In particular, it seems that he essentially showed a result similar to Theorem A 
for the cyclotomic $\mathbb{Z}_p$-extension of $K$
(see the proofs of \cite[Theorems 6.2 and 7.3]{Hachi}). 

\begin{cor}\label{rank_Zp2_ext}
We put $S = \{ \mathfrak{P}_i, \mathfrak{P}_j \}$ 
with $1 \leq i < j \leq 3$.
Then $\zprank \mathfrak{X}_S (K)$ is $2$.
\end{cor}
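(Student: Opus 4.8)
The plan is to reduce the statement to a $\mathbb{Z}_p$-rank computation via class field theory, exactly parallel to the computation already carried out for $S_p$ in this section, and then to feed in the finiteness supplied by Theorem A. First I would record the relevant exact sequence. Since $L(K)\subseteq M_S(K)$ (an unramified extension is a fortiori unramified outside $S$), the field $M_S(K)$ contains the maximal unramified abelian pro-$p$ extension, and one has
\[
0 \longrightarrow \Gal(M_S(K)/L(K)) \longrightarrow \mathfrak{X}_S(K) \longrightarrow X(K) \longrightarrow 0.
\]
Repeating the argument that yields $\mathcal{U}/\mathcal{E}\cong \Gal(M_{S_p}(K)/L(K))$, but using only the primes in $S$, I obtain $\Gal(M_S(K)/L(K))\cong \mathcal{U}_S/\mathcal{E}_S$, where $\mathcal{U}_S=\mathcal{U}_i\oplus\mathcal{U}_j$ and $\mathcal{E}_S$ denotes the image of $E'_K\otimes_{\mathbb{Z}}\mathbb{Z}_p$ in $\mathcal{U}_S$ under the projected diagonal embedding. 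As $X(K)$ (the $p$-part of the ideal class group) is finite, it then suffices to prove $\zprank(\mathcal{U}_S/\mathcal{E}_S)=2$, that is, $\zprank\mathcal{U}_S-\zprank\mathcal{E}_S=2$.

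The rank of $\mathcal{U}_S$ is immediate: each completion satisfies $[K_{\mathfrak{P}_i}:\mathbb{Q}_p]=2$, so $\zprank\mathcal{U}_i=2$ and $\zprank\mathcal{U}_S=4$. The substantive point is that $\zprank\mathcal{E}_S=2$, and this is where Theorem A enters. Let $k$ be the index in $\{1,2,3\}$ not belonging to $S$. The kernel of the projection $\mathcal{E}\to\mathcal{U}_S$ consists of those elements whose $\mathfrak{P}_i$- and $\mathfrak{P}_j$-components vanish, so it is contained in the kernel of the single-prime projection $\pi_i\colon\mathcal{E}\to\mathcal{U}_i$. Now $\Gal(M_{\mathfrak{P}_i}(K)/L(K))\cong\mathcal{U}_i/\mathrm{im}(\pi_i)$ by the same class field theory; since $\mathfrak{X}_{\mathfrak{P}_i}(K)$ is finite (Theorem A, together with the analogous statements for $\mathfrak{P}_2,\mathfrak{P}_3$) and $X(K)$ is finite, the quotient $\mathcal{U}_i/\mathrm{im}(\pi_i)$ is finite. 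Hence $\zprank\,\mathrm{im}(\pi_i)=\zprank\mathcal{U}_i=2=\zprank\mathcal{E}$, so $\ker\pi_i$ is finite. Therefore $\ker(\mathcal{E}\to\mathcal{U}_S)$ is finite and $\zprank\mathcal{E}_S=\zprank\mathcal{E}=2$.

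Combining the two ranks gives $\zprank\mathfrak{X}_S(K)=\zprank\mathcal{U}_S-\zprank\mathcal{E}_S=4-2=2$, as desired. I expect the only delicate step to be the claim $\zprank\mathcal{E}_S=2$: a priori, projecting the rank-$2$ module $\mathcal{E}$ onto two of the three local factors could drop the rank, and it is precisely the finiteness of $\mathfrak{X}_{\mathfrak{P}_i}(K)$ — which says that the global units already exhaust, up to finite index, the principal local units at a \emph{single} prime — that rules this out. Everything else is the standard class field theory rank bookkeeping already established in this section.
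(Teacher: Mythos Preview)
Your proof is correct and follows essentially the same route as the paper: both reduce to showing that the image $\mathcal{E}_S$ of the units in $\mathcal{U}_i\oplus\mathcal{U}_j$ has $\mathbb{Z}_p$-rank $2$, and both deduce this from Theorem~A by noting that already the image in a \emph{single} factor $\mathcal{U}_i$ has rank $2$. The paper invokes the Galois symmetry to reduce to $S=\{\mathfrak{P}_1,\mathfrak{P}_2\}$ and applies Theorem~A directly, whereas you keep $S$ general and quote the finiteness of $\mathfrak{X}_{\mathfrak{P}_i}(K)$ (which the paper records immediately after Theorem~A); this is a cosmetic difference only, and your introduction of the unused index $k$ can be dropped.
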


\begin{proof}
Since $K/\mathbb{Q}$ is a Galois extension, it is sufficient to 
show only for the case where  
$S = \{ \mathfrak{P}_1, \mathfrak{P}_2 \}$.
This corollary follows from the basic 
result on abelian pro-$p$ extensions with restricted ramification 
(see, e.g., \cite[Theorem 5]{Mai}, \cite[Proposition 3.1]{Hachi}).
Theorem A (for $K$) asserts that 
the image of $E'_K \otimes_{\mathbb{Z}} \mathbb{Z}_p 
\to \mathcal{U}_1$ has $\mathbb{Z}_p$-rank $2$.
Then the image of the mapping 
\[ E'_K \otimes_{\mathbb{Z}} \mathbb{Z}_p 
\to \mathcal{U}_1 \oplus \mathcal{U}_2 \]
induced from the diagonal embedding also has $\mathbb{Z}_p$-rank $2$.
Since $\zprank (\mathcal{U}_1 \oplus \mathcal{U}_2)$ is $4$, 
the assertion has been shown.
\end{proof}

\begin{definition}
Let $\mathcal{K}/K$ be an abelian (finite or infinite) extension,
and $\mathcal{K}'$ an intermediate field of $\mathcal{K}/K$.
For $i \in \{1,2,3 \}$, we define the following symbols.
\begin{itemize}
\item $D_i (\mathcal{K}/\mathcal{K}')$ : the decomposition subgroup 
of $\Gal (\mathcal{K}/\mathcal{K}')$ for a prime lying above $\mathfrak{P}_i$.
\item $I_i (\mathcal{K}/\mathcal{K}')$ : the inertia subgroup 
of $\Gal (\mathcal{K}/\mathcal{K}')$ for a prime lying above $\mathfrak{P}_i$.
\end{itemize}
Since $\mathcal{K}/K$ is abelian, these groups are uniquely determined 
independent on the choice of a prime lying above $\mathfrak{P}_i$.
\end{definition}

\begin{lem}\label{decomp_inertia_rank}
Assume that the condition (C1) in Theorem \ref{main_thm} is satisfied.
Then, 
\[ \zprank I_i (\widetilde{K}/K) = 2 \quad \text{and} \quad 
\zprank D_i (\widetilde{K}/K) = 3 \]
for every $i \in \{1,2,3 \}$.
\end{lem}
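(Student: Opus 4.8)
The plan is to bound each rank from above and below, carry out the computation for $i=1$ (and $i=3$), and then invoke the action of $\Gal(K/\mathbb{Q}) \cong S_3$, which permutes $\mathfrak{P}_1, \mathfrak{P}_2, \mathfrak{P}_3$ transitively and stabilizes $\widetilde{K}/K$; since conjugation preserves $\mathbb{Z}_p$-ranks, it suffices to treat a single prime. Throughout I use that every $\mathbb{Z}_p$-extension of $K$ is unramified outside $S_p$, so $\widetilde{K} \subseteq M_{S_p}(K)$ and $\Gal(\widetilde{K}/K) \cong \mathbb{Z}_p^{\oplus 4}$.

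For the inertia rank, first I would establish the upper bound: in an abelian pro-$p$ extension the inertia subgroup at $\mathfrak{P}_i$ is a quotient of the group of local principal units $\mathcal{U}_i$, whence $\zprank I_i(\widetilde{K}/K) \le \zprank \mathcal{U}_i = 2$. For the matching lower bound I would use Corollary \ref{rank_Zp2_ext}. Taking $i=3$, the fixed field of $I_3(\widetilde{K}/K)$ is the maximal subextension of $\widetilde{K}/K$ unramified at $\mathfrak{P}_3$; being also unramified outside $S_p$, it is unramified outside $\{\mathfrak{P}_1, \mathfrak{P}_2\}$, hence contained in $M_{\{\mathfrak{P}_1,\mathfrak{P}_2\}}(K)$. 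Therefore $\Gal(\widetilde{K}/K)/I_3(\widetilde{K}/K)$ is a quotient of $\mathfrak{X}_{\{\mathfrak{P}_1,\mathfrak{P}_2\}}(K)$, whose $\mathbb{Z}_p$-rank is $2$ by the Corollary, giving $\zprank I_3(\widetilde{K}/K) \ge 4-2 = 2$. Combining the two bounds yields $\zprank I_i(\widetilde{K}/K) = 2$. (Note this part does not require (C1); equivalently, one checks via Theorem~A that $\mathcal{U}_i \cap \mathcal{E}$ has $\mathbb{Z}_p$-rank $0$.)

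For the decomposition rank, the upper bound is immediate: $D_i(\widetilde{K}/K)/I_i(\widetilde{K}/K)$ is procyclic, topologically generated by a Frobenius element, so $\zprank D_i(\widetilde{K}/K) \le \zprank I_i(\widetilde{K}/K) + 1 = 3$. The reverse inequality is where (C1) enters. Consider the $\mathbb{Z}_p$-extension $N^{(1)} = N^* K$ of $K$, and focus on $\mathfrak{P}_1$, the unique prime of $K$ above $\mathfrak{p}$. Since $N^*/F$ and $K/F$ are both unramified at $\mathfrak{p}$, the extension $N^{(1)}/K$ is unramified at $\mathfrak{P}_1$, so $I_1(\widetilde{K}/K) \subseteq \Gal(\widetilde{K}/N^{(1)})$. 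Because $N^* \cap K = F$, restriction gives an isomorphism $\Gal(N^{(1)}/K) \xrightarrow{\sim} \Gal(N^*/F) \cong \mathbb{Z}_p$ carrying the decomposition group of $\mathfrak{P}_1$ to that of $\mathfrak{p}$; by (C1) the latter has finite index, i.e. $\mathbb{Z}_p$-rank $1$. Hence $D_1(\widetilde{K}/K)$ surjects onto $\Gal(N^{(1)}/K) \cong \mathbb{Z}_p$, while $I_1(\widetilde{K}/K)$ maps to $0$ under this surjection; consequently $\zprank D_1(\widetilde{K}/K) \ge \zprank I_1(\widetilde{K}/K) + 1 = 3$, forcing equality. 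The $S_3$-symmetry then transports both equalities to all $i$, since conjugation by an appropriate element of $\Gal(K/\mathbb{Q})$ moves $(F, \mathfrak{p}, N^*, N^{(1)})$ to its analogue for $\mathfrak{P}_i$.

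I expect the decomposition lower bound to be the main obstacle: one must correctly transport (C1) — a statement about $\mathfrak{p}$ in $N^*/F$ — into the splitting behaviour of $\mathfrak{P}_1$ in $N^{(1)}/K$, and then verify that the resulting rank-$1$ Frobenius direction is genuinely transverse to the rank-$2$ inertia $I_1(\widetilde{K}/K)$ inside $\Gal(\widetilde{K}/K) \cong \mathbb{Z}_p^{\oplus 4}$. The inertia computation, by contrast, is a fairly formal consequence of Corollary \ref{rank_Zp2_ext} together with the local-units bound.
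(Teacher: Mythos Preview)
Your argument is correct and follows essentially the same route as the paper: both use the local bound $\zprank \mathcal{U}_i = 2$ for the upper bounds, Corollary~\ref{rank_Zp2_ext} (equivalently, the paper's $N^\sharp$) for the inertia lower bound, and the $\mathbb{Z}_p$-extension $N^{(1)}/K$ together with (C1) for the decomposition lower bound, the only difference being that the paper packages the two lower bounds into a single explicit $\mathbb{Z}_p^{\oplus 3}$-extension $N^{(1)}N^\sharp$. One small inaccuracy: $K/F$ need not be unramified at $\mathfrak{p}$ (it is ramified exactly when $p$ ramifies in $k$), but this is irrelevant to your conclusion, since $N^{(1)}/K$ is unramified at $\mathfrak{P}_1$ simply because $\Gal(N^{(1)}/K)$ embeds into $\Gal(N^*/F)$ via restriction and the latter has trivial inertia at $\mathfrak{p}$.
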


\begin{proof}
Since $\zprank \mathcal{U}_i$ is $2$, we see that 
$\zprank I_i (\widetilde{K}/K)$ is at most $2$, 
and $\zprank D_i (\widetilde{K}/K)$ is at most $3$.
We shall construct a $\mathbb{Z}_p^{\oplus 3}$-extension 
such that the inertia and decomposition subgroups have the maximal 
$\mathbb{Z}_p$-rank.

It is sufficient to show the assertion for $i=1$.
By Corollary \ref{rank_Zp2_ext}, there is a unique 
$\mathbb{Z}_p^{\oplus 2}$-extension $N^{\sharp} / K$ 
unramified outside $\{ \mathfrak{P}_1, \mathfrak{P}_2 \}$.
By Theorem A, we see that 
$\zprank I_1 (N^{\sharp} / K)$ must be $2$.
From this fact, we also see that 
$N^{(1)} \cap N^{\sharp} /K$ is a finite extension
(note that $N^{(1)}/K$ is unramified at $\mathfrak{P}_1$).
Then, $N^{(1)} N^{\sharp} /K$ is a $\mathbb{Z}_p^{\oplus 3}$-extension.
We assumed that (C1) is satisfied, that is, 
$\mathfrak{P}_1$ is finitely decomposed in $N^{(1)} /K$.
Combining these facts, we see that 
\[ \zprank I_i (N^{(1)} N^{\sharp}/K) = 2 \quad \text{and} \quad 
\zprank D_i (N^{(1)} N^{\sharp}/K) = 3. \]
The assertion of this lemma follows from the facts stated 
in the previous paragraph.
\end{proof}

\begin{rem}
We do not need the satisfaction of (C1) to show $\zprank I_i (\widetilde{K}/K) = 2$.
(There is another approach to show this fact by using $\widetilde{k}/k$. 
See \cite[Section 3]{Min}.)
One can show that the infiniteness of $D_1 (\widetilde{K}/K) / I_1 (\widetilde{K}/K)$ 
is equivalent to the satisfaction of (C1). 
Note that Minardi also considered the infiniteness of
$D_i (\widetilde{K}/K) / I_i (\widetilde{K}/K)$ for certain $S_3$-extensions $K/\mathbb{Q}$
(see \cite[Sections 3, 6]{Min}). 
In it, he mentioned the equivalence of the infiniteness of 
$D_i (\widetilde{K}/K) / I_i (\widetilde{K}/K)$ 
and the validity of the conjecture called ``Jaulent's conjecture'' 
(a conjecture which seems to be derived from \cite[p.155, Conjecture]{Jau}) there.
\end{rem}

\begin{prop}\label{key_prop}
(i) Assume that the condition (C1) in Theorem \ref{main_thm} is satisfied.
Then, $\zprank (D_2 (\widetilde{K}/K) \cap D_3 (\widetilde{K}/K))$ is $2$. \\
(ii) $I_1 (\widetilde{K}/K) \cap D_2 (\widetilde{K}/K) \cap D_3 (\widetilde{K}/K)$ 
is trivial.
\end{prop}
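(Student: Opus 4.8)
The plan is to reduce both parts to computations of $\mathbb{Z}_p$-ranks inside $G := \Gal(\widetilde{K}/K) \cong \mathbb{Z}_p^{\oplus 4}$, abbreviating $I_i = I_i(\widetilde{K}/K)$ and $D_i = D_i(\widetilde{K}/K)$. For the bookkeeping I would use the additivity $\zprank(A \cap B) + \zprank(A + B) = \zprank A + \zprank B$ for closed subgroups $A, B$ of $G$, together with the dictionary that the fixed field of $A+B$ is $\widetilde{K}^{A} \cap \widetilde{K}^{B}$, of $\mathbb{Z}_p$-rank $4 - \zprank(A+B)$ over $K$. I would also record the following consequence of Theorem~A, used throughout: since $\zprank \mathcal{U}_i = 2$ and Theorem~A forces the projection $\mathcal{E} \to \mathcal{U}_i$ to have $\mathbb{Z}_p$-rank $2$ (as in the proof of Corollary~\ref{rank_Zp2_ext}), the global units $\mathcal{E}$ inject, up to finite index, into each local factor; hence $\zprank I_i = 2$ for every $i$. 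Moreover, since $\widetilde{K}$ and $M_{S_p}(K)$ differ by the finite extension $L(K)$, the quotient map identifies $V := G \otimes \mathbb{Q}_p$ with $(\mathcal{U}_2 \oplus \mathcal{U}_3) \otimes \mathbb{Q}_p$ in such a way that the images $\bar{I}_2, \bar{I}_3$ are the two evident summands while $\bar{I}_1$ is the graph of the isomorphism $\gamma \colon \mathcal{U}_2 \otimes \mathbb{Q}_p \to \mathcal{U}_3 \otimes \mathbb{Q}_p$ induced by $\mathcal{E}$. In particular $\bar{I}_i \cap \bar{I}_j = 0$ for $i \ne j$.

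For part (i), I apply additivity to $A = D_2$, $B = D_3$. By Lemma~\ref{decomp_inertia_rank} (which uses (C1)) one has $\zprank D_2 = \zprank D_3 = 3$, so $\zprank(D_2 \cap D_3) = 6 - \zprank(D_2 + D_3)$. The fixed field of $D_2 + D_3$ is $\widetilde{K}^{D_2} \cap \widetilde{K}^{D_3}$, the maximal subextension of $\widetilde{K}/K$ in which both $\mathfrak{P}_2$ and $\mathfrak{P}_3$ split completely; there $\mathfrak{P}_2, \mathfrak{P}_3$ are unramified and $\widetilde{K}/K$ is unramified outside $\{\mathfrak{P}_1, \mathfrak{P}_2, \mathfrak{P}_3\}$, so this field lies in $M_{\mathfrak{P}_1}(K)$, which is finite by Theorem~A. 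Hence $\zprank(D_2 + D_3) = 4$ and $\zprank(D_2 \cap D_3) = 2$.

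For part (ii), since $I_1 \cap D_2 \cap D_3$ is a closed subgroup of the torsion-free module $G$, it suffices to prove it has $\mathbb{Z}_p$-rank $0$. If one of the two decomposition groups has rank $2$, say $\zprank D_2 = 2$, then $\bar{D}_2 = \bar{I}_2$ in $V$, whence $\bar{I}_1 \cap \bar{D}_2 \cap \bar{D}_3 \subseteq \bar{I}_1 \cap \bar{I}_2 = 0$ and we are done. So the essential case is $\zprank D_2 = \zprank D_3 = 3$. In the model above, write $\bar{D}_2 = \bar{I}_2 \oplus L_2$ and $\bar{D}_3 = L_3 \oplus \bar{I}_3$, where $L_2 \subset \mathcal{U}_3 \otimes \mathbb{Q}_p$ and $L_3 \subset \mathcal{U}_2 \otimes \mathbb{Q}_p$ are the nonzero Frobenius lines. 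Then $\bar{D}_2 \cap \bar{D}_3 = L_3 \oplus L_2$, and a direct computation with the graph $\bar{I}_1$ shows $\bar{I}_1 \cap \bar{D}_2 \cap \bar{D}_3 \ne 0$ if and only if $\gamma(L_3) = L_2$. Thus part (ii) is equivalent to the transversality $\gamma(L_3) \ne L_2$.

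The main obstacle is exactly this transversality, which is where genuine arithmetic must enter: it asserts that the Frobenius directions at $\mathfrak{P}_2$ and $\mathfrak{P}_3$ are not matched up by the isomorphism coming from the global units, a statement of the same flavour as the non-vanishing of a regulator. I would attack it using the $S_3 = \Gal(K/\mathbb{Q})$-symmetry: the transposition fixing $\mathfrak{P}_1$ swaps the two local factors, interchanges $L_2 \leftrightarrow L_3$, and restricts on $K_{\mathfrak{P}_1}$ to the nontrivial element $s$ of $\Gal(K_{\mathfrak{P}_1}/\mathbb{Q}_p)$. Equivariance of $\mathcal{E}$ then rewrites the condition $\gamma(L_3) = L_2$ as the statement that $L_3$ is an eigenline of the involution on $\mathcal{U}_2 \otimes \mathbb{Q}_p$ conjugate (via $\gamma$) to $s$. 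It then remains to show that the Frobenius line $L_3$ avoids both eigenlines of this involution; here I would bring in the explicit $\mathbb{Z}_p$-extension $N^{(1)} = N^* K$, in which $\mathfrak{P}_2$ and $\mathfrak{P}_3$ are non-split, together with its Galois conjugates and Theorem~A applied to $\mathfrak{P}_2$ and $\mathfrak{P}_3$, in order to locate the Frobenius lines off the eigenlines. I expect this last step to be the delicate point of the whole proposition.
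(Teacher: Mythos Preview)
Your argument for part~(i) is correct and is essentially the paper's own proof, phrased dually: the paper works with the decomposition fields $K^{\mathfrak{P}_2}$, $K^{\mathfrak{P}_3}$ and shows that their intersection is finite by Theorem~A, which is the same content as your observation that $\widetilde{K}^{D_2+D_3}\subset M_{\mathfrak{P}_1}(K)$.

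For part~(ii), your model is sound and your reduction to the transversality $\gamma(L_3)\neq L_2$ is correct, but the proof stops exactly where the substance begins. You yourself flag the last step as ``the delicate point'' and only outline a strategy: pull back to an eigenline question for the involution $\tau_1$ and then ``bring in $N^{(1)}$ and its Galois conjugates together with Theorem~A'' to exclude the bad eigenlines. No argument is given for why the Frobenius line avoids both eigenlines, and it is not clear that $N^{(1)}$ carries the information needed to pin them down; in any case none of (C1)--(C3) nor $N^{(1)}$ should appear here, since~(ii) is stated and proved in the paper without any hypothesis.

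The paper dispatches~(ii) by a direct computation that you may find cleaner than the eigenline picture. One chooses a $\Gal(K/\mathbb{Q})$-adapted system: a unit $\varepsilon_1\in E'_K$ fixed by $\tau_1$ with $\varepsilon_1\,\sigma(\varepsilon_1)\,\sigma^2(\varepsilon_1)=1$, setting $\varepsilon_2=\sigma(\varepsilon_1)$; and a $\tau_1$-fixed $\mathfrak{P}_1$-unit $\pi_1$, setting $\pi_i=\sigma^{i-1}(\pi_1)$. The decomposition groups at $\mathfrak{P}_2,\mathfrak{P}_3$ are then represented in $\mathcal{U}/\mathcal{E}$ by the explicit subgroups $\mathcal{D}_2,\mathcal{D}_3$ generated by $I_2$ (resp.~$I_3$) and the diagonal image of $\pi_2$ (resp.~$\pi_3$). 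Assuming a class $(u_1,1,1)\mathcal{E}$ lies in both, one writes down the resulting identities in $\mathcal{U}_1,\mathcal{U}_2,\mathcal{U}_3$ and uses the $\sigma$-action (which permutes the $\mathcal{U}_i$ and the $\varepsilon_i,\pi_i$) to transport every equation back into $\mathcal{U}_1$. Theorem~A says the image of $\langle\varepsilon_1,\varepsilon_2\rangle$ in $\mathcal{U}_1$ has $\mathbb{Z}_p$-rank~$2$, so exponents can be compared there; a short linear-algebra step (using also the relation $\varepsilon_3=\varepsilon_1^{-1}\varepsilon_2^{-1}$) forces the exponent $a_1$ of $\varepsilon_1$ to vanish, whence $u_1^n=1$. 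This is exactly your $\gamma(L_3)\neq L_2$, proved rather than asserted; if you want to salvage your eigenline formulation, the missing input is precisely this $\sigma$-transport together with the norm relation $\varepsilon_1\varepsilon_2\varepsilon_3=1$, not anything about $N^{(1)}$.
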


\begin{proof}
We shall show (i). 
Let $K^{\mathfrak{P}_2}$ (resp.~$K^{\mathfrak{P}_3}$) be the decomposition 
field of $\widetilde{K}/K$ for $\mathfrak{P}_2$ (resp.~$\mathfrak{P}_3$).
As a consequence of Lemma \ref{decomp_inertia_rank},
we see that both $\zprank \Gal (K^{\mathfrak{P}_2}/K)$ and 
$\zprank \Gal (K^{\mathfrak{P}_3}/K)$ are $1$.
By using Theorem A, we can see that 
$K^{\mathfrak{P}_2} \cap K^{\mathfrak{P}_3}/K$ is a finite extension.
Hence, the $\mathbb{Z}_p$-rank of $\Gal (K^{\mathfrak{P}_2} K^{\mathfrak{P}_3}/K)$ 
is $2$.
Since $K^{\mathfrak{P}_2} K^{\mathfrak{P}_3}$ corresponds to 
$D_2 (\widetilde{K}/K) \cap D_3 (\widetilde{K}/K)$, we obtain (i).

To show (ii), we first give several preparations.
Let $\sigma, \tau_1$ be an elements of $\Gal (K/\mathbb{Q})$ such that 
$\sigma$ generates $\Gal (K/k)$ and $\tau_1$ generates $\Gal (K/F)$.
Assume that $\sigma (\mathfrak{P}_1) = \mathfrak{P}_2$ and 
$\sigma^2 (\mathfrak{P}_1) = \mathfrak{P}_3$.
Then $\sigma \tau_1 \sigma^{-1}$ fixes $\mathfrak{P}_2$ and 
$\sigma^2 \tau_1 \sigma^{-2}$ fixes $\mathfrak{P}_3$.

We can take $\varepsilon_1 \in E'_K$ satisfying the following conditions:
$\tau_1 (\varepsilon_1) =\varepsilon_1$,  
$\varepsilon_1 \sigma (\varepsilon_1) \sigma^2 (\varepsilon_1)=1$, 
and $\varepsilon_1, \sigma(\varepsilon_1)$ are multiplicative independent.
We put $\varepsilon_2 = \sigma (\varepsilon_1)$ and 
$\varepsilon_3 = \sigma^2 (\varepsilon_1)$.
Since Leopoldt's conjecture holds for $K$ and $p$, 
we see that the image of 
\[ \langle \varepsilon_1, \varepsilon_2 \rangle \otimes_{\mathbb{Z}} 
\mathbb{Z}_p \to \mathcal{U} \]
has $\mathbb{Z}_p$-rank $2$.

We can also take a $\mathfrak{P}_1$-unit $\pi_1$ of $K$ 
satisfying the following conditions: 
$\pi_1$ generates a positive power of $\mathfrak{P}_1$,
$\tau_1 (\pi_1) =\pi_1$, and $\pi_1 \equiv 1 \pmod{\mathfrak{P}_i}$ for every $i \in \{2,3 \}$.
We put $\pi_2 = \sigma (\pi_1)$ and 
$\pi_3 = \sigma^2 (\pi_1)$.
Then $\pi_2$ (resp.~$\pi_3$) is a $\mathfrak{P}_2$-unit 
(resp.~$\mathfrak{P}_3$-unit).
We shall define the subgroups $\mathcal{D}_2$, $\mathcal{D}_3$ of 
$\mathcal{U} = \mathcal{U}_1 \oplus \mathcal{U}_2 \oplus \mathcal{U}_3$ 
as the following:
\[ \mathcal{D}_2 = \{ (\pi_2^x, u_2, \pi_2^x) \; | \; 
x \in \mathbb{Z}_p, u_2 \in \mathcal{U}_2 \}, \quad
\mathcal{D}_3 = \{ (\pi_3^y, \pi_3^y, u_3) \; | \; 
y \in \mathbb{Z}_p, u_3 \in \mathcal{U}_3 \}. \]
By class field theory, the image of 
$\mathcal{D}_2 \to \mathcal{U}/ \mathcal{E}$ 
(resp.~$\mathcal{D}_3 \to \mathcal{U}/ \mathcal{E}$)
corresponds to a finite index subgroup of 
$D_{\mathfrak{P}_2} (M_{S_p} (K)/K)$
(resp.~$D_{\mathfrak{P}_3} (M_{S_p} (K)/K)$) 
(cf. \cite[pp.24--25]{Min}).
We also note that the image of 
\[ \{ (u_1, 1, 1) \; | \; u_1 \in \mathcal{U}_1 \} 
\to \mathcal{U}/ \mathcal{E} \]
corresponds to $I_{\mathfrak{P}_1} (M_{S_p} (K)/K)$.

We claim that
\[ I_{\mathfrak{P}_1} (M_{S_p} (K)/K) 
\cap D_{\mathfrak{P}_2} (M_{S_p} (K)/K) \cap D_{\mathfrak{P}_3} (M_{S_p} (K)/K) \]
is finite.
Since $M_{S_p} (K)/ \widetilde{K}$ is a finite extension and 
$\Gal (\widetilde{K}/ K)$ is $\mathbb{Z}_p$-torsion free, 
the assertion of (ii) follows from this claim.

In the remaining part, we shall give a proof of the above claim.
We take an element $u_1 \in \mathcal{U}_1$ such that 
the class $(u_1, 1, 1) \mathcal{E}$ corresponds to the 
element of $\mathfrak{X}_{S_p} (K)$ contained in 
$D_{\mathfrak{P}_2} (M_{S_p} (K)/K) \cap D_{\mathfrak{P}_3} (M_{S_p} (K)/K)$.
Then, there exists a non-zero integer $n$ such that 
$(u_1^n, 1, 1) \mathcal{E}$ is contained 
in both the image of $\mathcal{D}_2 \to \mathcal{U}/ \mathcal{E}$ and 
the image of $\mathcal{D}_3 \to \mathcal{U}/ \mathcal{E}$.
That is, there exist 
\[ \varepsilon, \varepsilon' \in E'_K \otimes_{\mathbb{Z}} \mathbb{Z}_p,
\; x,y \in \mathbb{Z}_p, \; u_2 \in \mathcal{U}_2, \; u_3 \in \mathcal{U}_3 \]
such that 
\[ (u_1^n \varepsilon, \varepsilon, \varepsilon) 
=(\pi_2^x, u_2, \pi_2^x) \quad \text{and} \quad
(u_1^n \varepsilon', \varepsilon', \varepsilon') 
=(\pi_3^y, \pi_3^y, u_3) \quad \text{in $\mathcal{U}$.} \]
By retaking $n$ if necessary, we may assume that 
$\varepsilon, \varepsilon'$ are the elements of 
$\langle \varepsilon_1, \varepsilon_2 \rangle \otimes_{\mathbb{Z}} \mathbb{Z}_p$.
Hence we write $\varepsilon = \varepsilon_1^{a_1} \varepsilon_2^{a_2}$ 
and $\varepsilon' = \varepsilon_1^{b_1} \varepsilon_2^{b_2}$ 
with $a_1$, $a_2$, $b_1$, $b_2 \in \mathbb{Z}_p$.
By summarizing them, we see that 
\begin{equation}\label{eq_U1}
u_1^n \varepsilon_1^{a_1} \varepsilon_2^{a_2} = \pi_2^x, \quad 
u_1^n \varepsilon_1^{b_1} \varepsilon_2^{b_2} = \pi_3^y \quad
\text{in $\mathcal{U}_1$},   
\end{equation}
\begin{equation}\label{eq_U2}
\varepsilon_1^{a_1} \varepsilon_2^{a_2} = u_2, \quad 
\varepsilon_1^{b_1} \varepsilon_2^{b_2} = \pi_3^y \quad
\text{in $\mathcal{U}_2$},   
\end{equation}
\begin{equation}\label{eq_U3}
\varepsilon_1^{a_1} \varepsilon_2^{a_2} = \pi_2^x, \quad 
\varepsilon_1^{b_1} \varepsilon_2^{b_2} = u_3 \quad
\text{in $\mathcal{U}_3$}.   
\end{equation}
The second equation of (\ref{eq_U2}) can be rewritten as 
the following:
\begin{equation}\label{eq_U12}
\varepsilon_1^{b_2} \varepsilon_2^{b_1} 
= \pi_3^y \quad
\text{in $\mathcal{U}_1$}.
\end{equation}
The first equation of (\ref{eq_U3}) can be rewritten as 
the following:
\begin{equation}\label{eq_U13}
\varepsilon_3^{a_1} \varepsilon_2^{a_2} 
(= \varepsilon_1^{-a_1} \varepsilon_2^{-a_1 + a_2})
= \pi_2^x \quad
\text{in $\mathcal{U}_1$}.
\end{equation}
Then, combining (\ref{eq_U12}), (\ref{eq_U13}) with (\ref{eq_U1}), 
we obtain the following equations:
\[ u_1^n \varepsilon_1^{a_1} \varepsilon_2^{a_2} 
= \varepsilon_1^{-a_1} \varepsilon_2^{-a_1 + a_2}, \quad
u_1^n \varepsilon_1^{b_1} \varepsilon_2^{b_2} 
= \varepsilon_1^{b_2} \varepsilon_2^{b_1} \quad \text{in $\mathcal{U}_1$}. \]
Hence 
\[ \varepsilon_1^{-2 a_1} \varepsilon_2^{-a_1} 
= \varepsilon_1^{b_2 - b_1} \varepsilon_2^{b_1 -b_2} \quad \text{in $\mathcal{U}_1$} \]
We note that the image of 
$\langle \varepsilon_1, \varepsilon_2 \rangle \otimes_{\mathbb{Z}} \mathbb{Z}_p$
in $\mathcal{U}_1$ has $\mathbb{Z}_p$-rank $2$ by Theorem A.
This implies that 
\[ -2 a_1 = b_2 -b_1, \; -a_1 = b_1 - b_2 \]
and then $a_1 =0$.
Therefore, we see that $u_1^n=1$.
Our claim follows from this.
\end{proof}

At the end of this section, we introduce the following result,
which is a specialized version of \cite[Proposition 4.B]{Min}.

\begin{propB}[Minardi \cite{Min}]
Let $\mathcal{K}/K$ be a $\mathbb{Z}_p^{\oplus d}$-extension 
($d$ is $3$ or $4$).
Assume that there exists an intermediate 
$\mathbb{Z}_p^{\oplus d-1}$-extension $\mathcal{K}'/K$ of $\mathcal{K}/K$ satisfying the 
following property:
for every $i \in \{ 1,2,3 \}$ such that $I_i (\mathcal{K}/\mathcal{K}')$ 
is not trivial, $\zprank D_i (\mathcal{K}'/K) \geq 2$.
Under this assumption, 
the pseudo-nullity of $X (\mathcal{K}')$ as a 
$\Lambda_{\Gal (\mathcal{K}'/K)}$-module implies 
the pseudo-nullity of $X (\mathcal{K})$ as a 
$\Lambda_{\Gal (\mathcal{K}/K)}$-module.
\end{propB}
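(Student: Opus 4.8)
The plan is to pass to the $\mathbb{Z}_p$-extension $\mathcal{K}/\mathcal{K}'$ and run a genus-theoretic descent, reducing the pseudo-nullity of $X(\mathcal{K})$ over $\Lambda_{\Gal(\mathcal{K}/K)}$ to that of $X(\mathcal{K}')$ over $\Lambda_{\Gal(\mathcal{K}'/K)}$ together with a control of the ramified primes. Write $\Gamma=\Gal(\mathcal{K}/\mathcal{K}')\cong\mathbb{Z}_p$, fix a topological generator $\gamma$ and put $T=\gamma-1$, so that $\Lambda_{\Gal(\mathcal{K}/K)}=\Lambda_{\Gal(\mathcal{K}'/K)}[[T]]$; I abbreviate $\Lambda'=\Lambda_{\Gal(\mathcal{K}'/K)}$ and $G'=\Gal(\mathcal{K}'/K)$, so that $\zprank G'=d-1$ and $\dim\Lambda'=d$. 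Recall that $X(\mathcal{K})$ is finitely generated and torsion over $\Lambda_{\Gal(\mathcal{K}/K)}$.

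First I would record the purely algebraic ascent step. For a finitely generated $\Lambda'[[T]]$-module $M$ one has $\mathrm{Supp}_{\Lambda'[[T]]}(M/TM)=\mathrm{Supp}(M)\cap V(T)$, by Nakayama applied at each prime containing $T$. If $M/TM$ is pseudo-null over $\Lambda'=\Lambda'[[T]]/(T)$, its support has dimension $\leq\dim\Lambda'-2$; but if $M$ had a height-one prime $\mathfrak{P}$ in its support, then $V(\mathfrak{P})$ is a $d$-dimensional component, and cutting it by the nonunit $T$ produces a component of $\mathrm{Supp}(M)\cap V(T)$ of dimension $\geq\dim\Lambda'-1$ (using that $\Lambda'[[T]]$ is a catenary domain and that $T$ is a nonzerodivisor modulo $\mathfrak{P}$, the case $\mathfrak{P}=(T)$ being immediate), a contradiction. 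Hence \emph{$M/TM$ pseudo-null over $\Lambda'$ forces $M$ pseudo-null over $\Lambda'[[T]]$}, and it suffices to prove that the $\Gamma$-coinvariants $X(\mathcal{K})_\Gamma=X(\mathcal{K})/TX(\mathcal{K})$ are pseudo-null over $\Lambda'$.

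Next I would set up the descent sequence. Since $L(\mathcal{K}')/\mathcal{K}'$ is unramified, the compositum $L(\mathcal{K}')\mathcal{K}$ lies in $L(\mathcal{K})$, and (in the ramified case, where $L(\mathcal{K}')\cap\mathcal{K}=\mathcal{K}'$) the resulting surjection $X(\mathcal{K})\to X(\mathcal{K}')$ has trivial $\Gamma$-action on its target, so it factors through a surjection $X(\mathcal{K})_\Gamma\twoheadrightarrow X(\mathcal{K}')$. Its kernel is generated by the inertia of the primes ramifying in $\mathcal{K}/\mathcal{K}'$, that is, those $i$ with $I_i(\mathcal{K}/\mathcal{K}')\neq 1$; assembling one inertia generator for each prime of $\mathcal{K}'$ above such a $\mathfrak{P}_i$ yields, by Iwasawa's genus theory, an exact sequence of $\Lambda'$-modules
\[ \bigoplus_{i:\,I_i(\mathcal{K}/\mathcal{K}')\neq 1}\Lambda'/\mathfrak{d}_i \longrightarrow X(\mathcal{K})_\Gamma \longrightarrow X(\mathcal{K}') \longrightarrow 0, \]
where $\mathfrak{d}_i$ is generated by $\{\delta-1:\delta\in D_i(\mathcal{K}'/K)\}$, so that $\Lambda'/\mathfrak{d}_i\cong\mathbb{Z}_p[[G'/D_i(\mathcal{K}'/K)]]$, the primes of $\mathcal{K}'$ above $\mathfrak{P}_i$ being indexed by $G'/D_i(\mathcal{K}'/K)$. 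Establishing this sequence with the correct identification of the ramified local terms is the step I expect to be the main obstacle; once it is in place the rest is bookkeeping.

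Finally I would feed in the hypotheses. By assumption $X(\mathcal{K}')$ is pseudo-null over $\Lambda'$, and for each ramified $i$ we have $\zprank D_i(\mathcal{K}'/K)\geq 2$, whence $\Lambda'/\mathfrak{d}_i\cong\mathbb{Z}_p[[\mathbb{Z}_p^{\,d-1-\zprank D_i(\mathcal{K}'/K)}]]$ has Krull dimension $d-\zprank D_i(\mathcal{K}'/K)\leq d-2$ and is therefore pseudo-null over $\Lambda'$. Since pseudo-null $\Lambda'$-modules form a Serre subcategory (closed under submodules, quotients, and extensions), the displayed sequence shows $X(\mathcal{K})_\Gamma$ is pseudo-null over $\Lambda'$. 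Combined with the ascent step, this yields the pseudo-nullity of $X(\mathcal{K})$ over $\Lambda_{\Gal(\mathcal{K}/K)}$, as required. In the remaining case, where $I_i(\mathcal{K}/\mathcal{K}')=1$ for all $i$, the extension $\mathcal{K}/\mathcal{K}'$ is unramified, the local term is empty, and $X(\mathcal{K})_\Gamma$ is a quotient of $X(\mathcal{K}')$, hence immediately pseudo-null, so the same conclusion follows.
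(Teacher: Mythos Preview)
The paper does not supply its own proof of Proposition~B; it is simply quoted as a specialized form of \cite[Proposition~4.B]{Min}. Your argument is essentially the standard proof of Minardi's result---a genus-theoretic control of $X(\mathcal{K})_\Gamma$ combined with an algebraic ascent of pseudo-nullity along the extra variable~$T$---and is correct in substance. The ascent step is exactly the content of \cite[p.~12, Lemme~4]{Per}, which the paper invokes elsewhere.

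A few minor points to tighten. First, in the ramified case you assume $L(\mathcal{K}')\cap\mathcal{K}=\mathcal{K}'$; in general this intersection is a finite extension of~$\mathcal{K}'$, so the map $X(\mathcal{K})_\Gamma\to X(\mathcal{K}')$ is not surjective but has finite cokernel $\Gal(L(\mathcal{K}')\cap\mathcal{K}/\mathcal{K}')$, which is harmless for pseudo-nullity. Second, in the fully unramified case one has $L'=L(\mathcal{K}')$, so $X(\mathcal{K})_\Gamma=\Gal(L(\mathcal{K}')/\mathcal{K})$ is a \emph{submodule} of $X(\mathcal{K}')$ rather than a quotient; your conclusion still stands because pseudo-null modules are closed under submodules. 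Third, the identification $\Lambda'/\mathfrak{d}_i\cong\mathbb{Z}_p[[\mathbb{Z}_p^{\,d-1-\zprank D_i(\mathcal{K}'/K)}]]$ is only literally true when $G'/D_i(\mathcal{K}'/K)$ is torsion-free; in general $\Lambda'/\mathfrak{d}_i\cong\mathbb{Z}_p[[G'/D_i(\mathcal{K}'/K)]]$ is finite free over that power-series ring, so the Krull-dimension bound $d-\zprank D_i(\mathcal{K}'/K)\le d-2$ and hence the pseudo-nullity are unaffected.
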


\section{Proof of Theorem \ref{main_thm}}\label{Proof_main_thm}
We use the well known method, which was used in Minardi \cite{Min} 
and several other authors.
That is, we take a sequence of $\mathbb{Z}_p^{\oplus d}$-extensions 
$N^{(1)} \subset N^{(2)} \subset N^{(3)} \subset \widetilde{K}$, 
and show the pseudo-nullity inductively.

Let the notation be as in the previous sections.
In the following, we assume that all of the conditions 
(C1), (C2), (C3) of Theorem \ref{main_thm} are satisfied.

Recall that $N^*/F$ is the $\mathbb{Z}_p$-extension unramified 
outside $\mathfrak{p}^*$, and $N^{(1)} = N^* K$.
$\mathfrak{P}_1$ is unramified and finitely decomposed in $N^{(1)}$ by the assumption on (C1).
Both $\mathfrak{P}_2$ and $\mathfrak{P}_3$ are ramified in $N^{(1)}/K$. 
Moreover, both $\mathfrak{P}_2$ and $\mathfrak{P}_3$ 
are not decomposed in $N^{(1)}$ by the assumption on (C2). 
(This is satisfied even when $p=2$, because $\mathfrak{p}^*$ is decomposed in $K$.)

\begin{prop}
$X (N^{(1)})$ is finite.
\end{prop}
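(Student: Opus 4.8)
The plan is to prove that $X(N^{(1)})$ is finite by showing that it is pseudo-null as a module over $\Lambda := \mathbb{Z}_p[[\Gal(N^{(1)}/K)]]$; since $N^{(1)}/K$ is a $\mathbb{Z}_p$-extension, pseudo-nullity over $\Lambda$ is the same as finiteness, and this is in turn equivalent to the vanishing of both Iwasawa invariants $\mu$ and $\lambda$ of $X(N^{(1)})$, i.e.\ to the boundedness of the orders $|A_n|$ of the $p$-class groups $A_n = X(N^{(1)}_n)$ of the layers $N^{(1)}_n$. I would therefore reduce the statement to a boundedness assertion about the tower $\{A_n\}_n$.

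First I would set up the Iwasawa control theorem for $X(N^{(1)})$. Writing $\Gamma_n = \Gal(N^{(1)}/N^{(1)}_n)$, the coinvariants $X(N^{(1)})_{\Gamma_n}$ are compared with $A_n$ through the inertia subgroups of the ramified primes. By (C2) the two ramified primes $\mathfrak{P}_2,\mathfrak{P}_3$ are non-split in $N^{(1)}$, so each has a single prime above it in every layer and is (after a finite base change) totally ramified; hence the comparison is clean, the correction term having bounded $\mathbb{Z}_p$-corank coming from at most one of the two inertia subgroups. This reduces finiteness of $X(N^{(1)})$ to a uniform bound on $|X(N^{(1)})_{\Gamma_n}|$, equivalently on $|A_n|$.

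The essential input is Theorem A. For each $n$ the inclusion $L(N^{(1)}_n) \subseteq M_{\mathfrak{P}_1}(N^{(1)}_n)$ exhibits $A_n = X(N^{(1)}_n)$ as a quotient of $\mathfrak{X}_{\mathfrak{P}_1}(N^{(1)}_n)$, which is finite by Theorem A; the kernel is generated by the inertia at the primes above $\mathfrak{P}_1$, of which, by (C1), there are only finitely many, all unramified in $N^{(1)}/K$. I would use this together with the control theorem to pin down the growth of $A_n$ in the tower, the point being that Theorem A forces the vanishing of the contribution in the $\mathfrak{P}_1$-direction; in particular it is what should yield $\mu = 0$, which is not automatic here since $N^{(1)}/K$ is not the cyclotomic $\mathbb{Z}_p$-extension.

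The hard part will be exactly this last step: upgrading the per-layer finiteness supplied by Theorem A to a bound on $|A_n|$ that is uniform in $n$. Finiteness of $\mathfrak{X}_{\mathfrak{P}_1}(N^{(1)}_n)$ for each individual $n$ does not by itself force $\mu = \lambda = 0$, so the argument must extract the required uniformity from the precise decomposition behaviour: (C1) keeps the $\mathfrak{P}_1$-contribution finitely controlled, (C2) keeps the inertia corrections at $\mathfrak{P}_2,\mathfrak{P}_3$ of bounded rank, and Theorem A supplies the vanishing in the $\mathfrak{P}_1$-direction. Balancing these three ingredients in the control-theorem exact sequence—so that the $\Lambda$-rank and the $\mu$-invariant of $X(N^{(1)})$ are seen to vanish simultaneously—is where the main difficulty lies.
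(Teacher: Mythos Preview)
Your plan has a genuine gap: you never use hypothesis (C3), and the ``hard part'' you yourself isolate---upgrading the per-layer finiteness of $\mathfrak{X}_{\mathfrak{P}_1}(N^{(1)}_n)$ from Theorem~A to a \emph{uniform} bound on $|A_n|$---is precisely the statement to be proved, not something the listed ingredients supply. Theorem~A gives no control on how $|\mathfrak{X}_{\mathfrak{P}_1}(N^{(1)}_n)|$ grows with $n$, and (C1), (C2) only govern decomposition behaviour; none of this forces $\mu(X(N^{(1)}))=\lambda(X(N^{(1)}))=0$. Moreover, your proposed control comparison for $\mathfrak{X}_{\mathfrak{P}_1}$ along the tower is awkward because $N^{(1)}/N^{(1)}_n$ is ramified at $\mathfrak{P}_2,\mathfrak{P}_3$ and hence is \emph{not} contained in $M_{\mathfrak{P}_1}(N^{(1)}_n)$, so there is no clean coinvariant description of $\mathfrak{X}_{\mathfrak{P}_1}(N^{(1)}_n)$ in terms of $\mathfrak{X}_{\mathfrak{P}_1}(N^{(1)})$.

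The paper's argument is short and goes through a different prime. By (C3), $M_3:=M_{\mathfrak{P}_3}(K)/K$ is a finite cyclic $p$-extension in which $\mathfrak{P}_2$ is inert, so $\mathfrak{X}_{\mathfrak{P}_3}(M_3)=0$. The unique prime of $M_3$ above $\mathfrak{P}_2$ is totally ramified in the $\mathbb{Z}_p$-extension $N^{(1)}M_3/M_3$; a standard control argument (as in \cite[Proposition~4.2]{Hachi}) then gives $\mathfrak{X}_{\mathfrak{P}_3}(N^{(1)}M_3)_{\Gal(N^{(1)}M_3/M_3)}\cong\mathfrak{X}_{\mathfrak{P}_3}(M_3)=0$, whence $\mathfrak{X}_{\mathfrak{P}_3}(N^{(1)}M_3)=0$ by topological Nakayama. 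Descending the finite extension $N^{(1)}M_3/N^{(1)}$ (unramified outside $\mathfrak{P}_3$) yields that $\mathfrak{X}_{\mathfrak{P}_3}(N^{(1)})$ is finite, and $X(N^{(1)})$ is a quotient of it. The point you are missing is that (C3) is exactly what converts a qualitative finiteness statement at the base into a Nakayama-triviality statement up the tower; Theorem~A alone, applied in the $\mathfrak{P}_1$-direction, does not play this role.
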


\begin{proof}
Note that our situation is quite similar to that of \cite[Proposition 3.5 (1)]{Kata}, 
and the following proof is also essentially the same.

In this proof, we abbreviate $M_{\mathfrak{P}_3} (K)$ to $M_3$.
By the assumption on (C3) and the fact stated in the paragraph after 
Theorem A, we see that $M_3 / K$ is a finite cyclic $p$-extension.
Hence $\mathfrak{X}_{\mathfrak{P}_3} (M_3)$ is trivial.

The unique prime of $M_3$ lying above $\mathfrak{P}_2$ 
is totally ramified in the $\mathbb{Z}_p$-extension $N^{(1)} M_3 /M_3$.
By using the argument given in the proof of 
\cite[Proposition 4.2]{Hachi}, we see that the coinvariant quotient 
$\mathfrak{X}_{\mathfrak{P}_3} (N^{(1)} M_3)_{\Gal (N^{(1)} M_3/ M_3)}$ is 
isomorphic to $\mathfrak{X}_{\mathfrak{P}_3} (M_3)$ 
(see also the proof of \cite[Proposition 3.2]{I2011}). 
Hence, $\mathfrak{X}_{\mathfrak{P}_3} (N^{(1)} M_3)_{\Gal (N^{(1)} M_3/ M_3)}$ is trivial, 
and it can be shown that $\mathfrak{X}_{\mathfrak{P}_3} (N^{(1)} M_3)$ is also trivial 
by using topological Nakayama's lemma.
Since $N^{(1)} M_3 / N^{(1)}$ is a finite extension and unramified outside $\mathfrak{P}_3$, 
we see that $\mathfrak{X}_{\mathfrak{P}_3} (N^{(1)})$ is finite.

Note that $X (N^{(1)})$ is a quotient of $\mathfrak{X}_{\mathfrak{P}_3} (N^{(1)})$. 
Hence $X (N^{(1)})$ is also finite.
\end{proof}

We shall take the $\mathbb{Z}_p^{\oplus 2}$-extension 
$N^{(2)}/K$ as the following.
Recall Lemma \ref{decomp_inertia_rank} and its proof.
Let $N^{\sharp}/K$ be the unique $\mathbb{Z}_p^{\oplus 2}$-extension 
unramified outside $\{ \mathfrak{P}_1, \mathfrak{P}_2 \}$.
Since $\zprank I_1 (N^{\sharp}/K)$ is 2, 
we see that $N^{\sharp} \cap N^{(1)}/K$ is a finite extension.
Hence $N^{\sharp} N^{(1)}/K$ is a 
$\mathbb{Z}_p^{\oplus 3}$-extension.
Since $\mathfrak{P}_2$ is ramified in $N^{(1)}/K$, we can see that 
$\zprank I_2 (N^{\sharp} N^{(1)} /N^{(1)})$ is $1$.
Then, there is a (unique) intermediate field $N^{(2)}$ of 
$N^{\sharp} N^{(1)} /N^{(1)}$ such that 
$N^{(2)}/K$ is a $\mathbb{Z}_p^{\oplus 2}$-extension and 
the prime of $N^{(1)}$ lying above $\mathfrak{P}_2$ is unramified in $N^{(2)}$.
Hence, $N^{(2)} /N^{(1)}$ is unramified outside $\mathfrak{P}_1$. 
We note that $N^{(2)} /N^{(1)}$ is ramified at every prime lying above $\mathfrak{P}_1$.

\begin{prop}
$X (N^{(2)})$ is pseudo-null as a $\Lambda_{\Gal (N^{(2)}/K)}$-module.
\end{prop}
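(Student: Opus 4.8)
The plan is to prove pseudo-nullity of $X(N^{(2)})$ over $\Lambda_{\Gal(N^{(2)}/K)}$ by using the previous proposition (finiteness of $X(N^{(1)})$) together with the precise ramification behavior of $N^{(2)}/N^{(1)}$ established just above the statement. Since $N^{(2)}/K$ is a $\mathbb{Z}_p^{\oplus 2}$-extension, its group ring is a two-variable Iwasawa algebra, and pseudo-null means the module has codimension at least $2$, equivalently annihilated by two coprime elements (roughly, supported on a finite set of height-two primes). The natural device is to descend from $N^{(2)}$ to $N^{(1)}$ along the $\mathbb{Z}_p$-extension $N^{(2)}/N^{(1)}$.

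The key steps I would carry out are as follows. First, I would record that $N^{(2)}/N^{(1)}$ is unramified outside $\mathfrak{P}_1$ and is ramified at every prime above $\mathfrak{P}_1$, as stated in the paragraph preceding the proposition; this is the ramification datum that controls the descent. Second, I would relate $X(N^{(2)})$ to the ramified Iwasawa module over $N^{(1)}$ by a standard coinvariants/genus-theory argument: the coinvariant quotient $X(N^{(2)})_{\Gal(N^{(2)}/N^{(1)})}$ fits into an exact sequence involving $X(N^{(1)})$ and the inertia (local) contributions at the primes above $\mathfrak{P}_1$, since those are exactly the primes ramifying in $N^{(2)}/N^{(1)}$. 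Because $X(N^{(1)})$ is finite by the previous proposition, the coinvariant quotient is governed up to finite error by how the ramified primes behave, and in particular $X(N^{(2)})_{\Gal(N^{(2)}/N^{(1)})}$ is finite (a finitely generated $\mathbb{Z}_p$-module of rank controlled by the number of ramified primes minus a split-prime count, which collapses here). Third, from the finiteness of the coinvariants I would invoke a structural criterion for pseudo-nullity: for a finitely generated torsion $\Lambda_{\Gal(N^{(2)}/K)}$-module $X(N^{(2)})$, if the coinvariants under a suitable $\mathbb{Z}_p$-quotient (corresponding to $N^{(2)}/N^{(1)}$) are finite, then the characteristic ideal cannot be divisible by the height-one prime attached to that $\mathbb{Z}_p$-direction, and running this over the relevant directions forces the support to have codimension $\geq 2$, i.e. pseudo-nullity. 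In practice I would quote the version of this descent principle used by Minardi and the other cited authors (it is the analogue of Proposition B, but now applied at the level of $N^{(2)}/N^{(1)}$ with $X(N^{(1)})$ finite).

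The main obstacle I anticipate is the second step: controlling the inertia/decomposition contribution at the primes above $\mathfrak{P}_1$ in the exact sequence for the coinvariants, and ensuring no extra $\mathbb{Z}_p$-rank survives. Specifically, I must verify that although $\mathfrak{P}_1$ is ramified in $N^{(2)}/N^{(1)}$, the number of primes of $N^{(1)}$ above $\mathfrak{P}_1$ (equivalently, the decomposition behavior, governed by (C1) via Theorem A) is such that the local terms do not contribute a positive $\mathbb{Z}_p$-rank to the coinvariant quotient. This is where condition (C1) and the finiteness statements of Theorem A are genuinely needed, and the bookkeeping of the genus-theoretic exact sequence—keeping track of which primes split, ramify, and how many lie above $\mathfrak{P}_1$—is the delicate part. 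Once the coinvariants are shown finite, concluding pseudo-nullity is formal. I would therefore expect the argument to parallel the proof of the preceding proposition closely, reusing the technique from \cite[Proposition 4.2]{Hachi} and \cite[Proposition 3.2]{I2011} to pass between the module over $N^{(2)}$ and its coinvariants over $N^{(1)}$.
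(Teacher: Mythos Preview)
Your overall architecture is correct: show that the coinvariant quotient $\mathcal{X}=X(N^{(2)})_{\Gal(N^{(2)}/N^{(1)})}$ is finite, and then conclude pseudo-nullity by a standard structural lemma (the paper invokes \cite[p.~12, Lemme~4]{Per} directly; one direction suffices, there is no need to ``run over directions''). The gap is in how you propose to prove that $\mathcal{X}$ is finite.

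You want to bound $\mathcal{X}$ via the sequence $0\to\mathcal{I}\to\Gal(L'/N^{(1)})\to X(N^{(1)})\to 0$, with $\mathcal{I}$ generated by the inertia groups $I_P$ at the finitely many primes $P\mid\mathfrak{P}_1$ of $N^{(1)}$, and then assert that these local terms ``collapse'' by a ramified/split prime count. They do not collapse in any obvious way: the completion of $N^{(1)}$ at such a $P$ is the unramified $\mathbb{Z}_p$-extension of $K_{\mathfrak{P}_1}$, so the relevant local unit groups are not finitely generated over $\mathbb{Z}_p$, and no naive genus-type count bounds $\zprank\mathcal{I}$. The paper supplies the missing idea: the decomposition group $\Gamma_m=D_1(N^{(1)}/K)$ acts trivially on every $I_P$, hence on $\mathcal{I}$, and (using that $X(N^{(1)})$ is finite) on $\Gal(L''/N^{(1)})$, where $L''$ is $L'$ modulo its maximal finite $\Lambda_{\Gamma_m}$-submodule. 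Thus $L''$ is already abelian over the decomposition field $N^{(1)}_m$, a \emph{finite} extension of $K$ by~(C1). At that finite level one bounds $\Gal(L''/N^{(1)}_m)$ using the inertia subgroups at the unique primes above $\mathfrak{P}_2$ and $\mathfrak{P}_3$ (each of $\mathbb{Z}_p$-rank~$1$); the remaining quotient is an abelian pro-$p$ extension of $N^{(1)}_m$ unramified outside $\mathfrak{P}_1$, hence finite by Theorem~A. This gives $\zprank\Gal(L''/N^{(1)}_m)=2$ and therefore $\mathcal{X}$ finite. Note that the decisive input is inertia at $\mathfrak{P}_2,\mathfrak{P}_3$ together with Theorem~A for $N^{(1)}_m$, not the inertia at $\mathfrak{P}_1$ that you focus on. The references you cite at the end (\cite[Proposition~4.2]{Hachi}, \cite[Proposition~3.2]{I2011}) are the ones used in the \emph{previous} proposition; the model for the present argument is \cite[Proposition~3.B]{Min} (and its analogues \cite[Proposition~4.1]{I2011}, \cite[Proposition~3.6]{Kata}).
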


\begin{proof}
We follow the argument given in the proof of Minardi \cite[Proposition 3.B]{Min}.
There are several similar results which are shown based on the same idea
(\cite[Proposition 4.1]{I2011}, \cite[Section 3, Step 2]{Fujii}, \cite[Proposition 3.6]{Kata}, \cite[Section 5]{Taka}).
Hence, we only give an outline for the well known part.

We denote by $\mathcal{X}$ the coinvariant quotient 
$X (N^{(2)})_{\Gal (N^{(2)}/N^{(1)})}$.
We shall show that $\mathcal{X}$ is finite.
Put $\Gamma_m = D_1 (N^{(1)}/K)$, and 
let $N^{(1)}_m$ be the fixed field of $\Gamma_m$.
Since we assumed that (C1) is satisfied, we see that $N^{(1)}_m/K$ is a finite extension. 

Let $L'$ be the intermediate field of $L (N^{(2)})/N^{(2)}$ corresponding to 
$\mathcal{X}$, then $L'$ is an abelian extension over $N^{(1)}$.
We note that $\Gal (L'/N^{(1)})$ can be considered as a $\Lambda_{\Gamma_m}$-module.
Note that $L(N^{(1)})$ is an intermediate field of $L' / N^{(1)}$.
Let $\mathcal{I}$ be the kernel of the natural surjection 
$\Gal (L' /N^{(1)}) \to X (N^{(1)})$.
We denote by $\mathcal{S}$ the set of primes of $N^{(1)}$ lying above $\mathfrak{P}_1$. 
For $P \in \mathcal{S}$, we also denote by 
$I_P$ the inertia subgroup of $\Gal (L' /N^{(1)})$ for $P$.
Note that $\Gamma_m$ acts trivially on $I_P$.
Since $\mathcal{I}$ is (topologically) generated by $I_P$ with $P \in \mathcal{S}$, 
it is a $\Lambda_{\Gamma_m}$-submodule of $\mathcal{X}$ with trivial 
$\Gamma_m$-action.
We note that $\mathcal{I}$ is finitely generated as a $\mathbb{Z}_p$-module, 
and hence $\Gal (L'/N^{(1)})$ is also.

Let $L''$ be the intermediate field of $L'/N^{(1)}$ corresponding to 
the maximal finite $\Lambda_{\Gamma_m}$-submodule of $\Gal (L'/N^{(1)})$.
Then $L''$ contains $N^{(2)}$.
By using the finiteness of $X (N^{(1)})$, we can see that 
$\Gamma_m$ acts trivially on $\Gal (L''/N^{(1)})$.
Hence $L''$ is an abelian extension over $N^{(1)}_m$.

Let $I_2$ (resp.~$I_3$) be the inertia subgroup of 
$\Gal (L''/N^{(1)}_m)$ for the unique prime lying above $\mathfrak{P}_2$ 
(resp.~$\mathfrak{P}_3$), 
and $\mathcal{I}'$ the subgroup of $\Gal (L''/N^{(1)}_m)$ (topologically) generated by 
$I_2$ and $I_3$.
Since both $I_2$ and $I_3$ have $\mathbb{Z}_p$-rank $1$, 
we see that $\zprank \mathcal{I}'$ is at most $2$.
The fixed field of $L''$ by $\mathcal{I}'$ is an 
abelian pro-$p$ extension of $N^{(1)}_m$ unramified outside $\mathfrak{P}_1$.
By Theorem A, it must be a finite extension.
Then, we conclude that $\zprank \Gal (L''/N^{(1)}_m)$ is $2$.

From the above facts, we see that $\mathcal{X} = \Gal (L'/N^{(2)})$ is finite.
By applying \cite[p.12, Lemme 4]{Per}, 
we obtain the pseudo-nullity of $X (N^{(2)})$.
\end{proof}

We recall that $N^{(2)}/N^{(1)}$ is unramified at 
every prime lying above $\mathfrak{P}_2$ or $\mathfrak{P}_3$.
We shall use Proposition B in the next step.
To apply this proposition, we need the information on the decomposition of these primes.
However, for our purpose, it is sufficient to show the following result 
(we do not determine $\zprank D_2 (N^{(2)}/K)$, $\zprank D_3 (N^{(2)}/K)$ exactly). 

\begin{lem}\label{N2_decomposition} 
The case where $\zprank D_2 (N^{(2)}/K) = \zprank D_3 (N^{(2)}/K) =1$ 
does not occur.
(Note that both $D_2 (N^{(2)}/K)$ and $D_3 (N^{(2)}/K)$ have $\mathbb{Z}_p$-rank at least $1$.)
\end{lem}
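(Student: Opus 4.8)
The plan is to translate everything into $\mathbb{Q}_p$-dimensions inside $\Gal(\widetilde{K}/K)$, where the results of Section~\ref{Preliminaries} already pin down all the relevant ranks, and then derive a numerical contradiction from the contradiction hypothesis $\zprank D_2(N^{(2)}/K)=\zprank D_3(N^{(2)}/K)=1$. The only input I will need from the construction of $N^{(2)}$ is that $\zprank I_1(N^{(2)}/K)=1$: indeed $N^{(1)}/K$ is unramified at $\mathfrak{P}_1$ while $N^{(2)}/N^{(1)}$ is ramified at every prime above $\mathfrak{P}_1$, so $I_1(N^{(2)}/K)$ is contained in $\Gal(N^{(2)}/N^{(1)})\cong\mathbb{Z}_p$ and is nontrivial, hence of $\mathbb{Z}_p$-rank $1$.

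Write $G=\Gal(\widetilde{K}/K)\cong\mathbb{Z}_p^{\oplus 4}$ and let $H=\Gal(\widetilde{K}/N^{(2)})$, a closed subgroup with $\zprank H=2$ since $N^{(2)}/K$ is a $\mathbb{Z}_p^{\oplus 2}$-extension. Passing to $\mathbb{Q}_p$-vector spaces, set $V=G\otimes_{\mathbb{Z}_p}\mathbb{Q}_p$ and $W=H\otimes_{\mathbb{Z}_p}\mathbb{Q}_p$, so $\dim V=4$ and $\dim W=2$. For each $i$, the decomposition (resp.\ inertia) subgroup of $N^{(2)}/K$ at $\mathfrak{P}_i$ is the image of $D_i(\widetilde{K}/K)$ (resp.\ $I_i(\widetilde{K}/K)$) under $G\to G/H$; hence its $\mathbb{Z}_p$-rank equals the $\mathbb{Q}_p$-dimension of the image of the corresponding subspace in $V/W$. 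I will freely use $\zprank(A+B)=\zprank A+\zprank B-\zprank(A\cap B)$ for closed subgroups of $G$, translated into dimensions.

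Now suppose $\zprank D_2(N^{(2)}/K)=\zprank D_3(N^{(2)}/K)=1$. Writing $d_i=D_i(\widetilde{K}/K)\otimes\mathbb{Q}_p$, Lemma~\ref{decomp_inertia_rank} gives $\dim d_i=3$, and the hypothesis gives $\dim\bigl((d_i+W)/W\bigr)=1$ for $i=2,3$. Since $\dim\bigl((d_i+W)/W\bigr)=3-\dim(d_i\cap W)$, this forces $\dim(d_i\cap W)=2=\dim W$, i.e.\ $W\subseteq d_2$ and $W\subseteq d_3$. Therefore $W\subseteq d_2\cap d_3$; but Proposition~\ref{key_prop}~(i) asserts $\zprank\bigl(D_2(\widetilde{K}/K)\cap D_3(\widetilde{K}/K)\bigr)=2$, so $\dim(d_2\cap d_3)=2=\dim W$ and hence $W=d_2\cap d_3$. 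Finally, Proposition~\ref{key_prop}~(ii) states that $I_1(\widetilde{K}/K)\cap D_2(\widetilde{K}/K)\cap D_3(\widetilde{K}/K)$ is trivial, which after tensoring reads $\iota_1\cap(d_2\cap d_3)=0$, where $\iota_1=I_1(\widetilde{K}/K)\otimes\mathbb{Q}_p$ has $\dim\iota_1=2$. Combined with $W=d_2\cap d_3$ this gives $\iota_1\cap W=0$, so the image of $\iota_1$ in $V/W$ has dimension $\dim\iota_1-\dim(\iota_1\cap W)=2$; that is, $\zprank I_1(N^{(2)}/K)=2$, contradicting $\zprank I_1(N^{(2)}/K)=1$. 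This contradiction proves the lemma.

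The argument is essentially bookkeeping once the right objects are identified, so I do not expect a serious obstacle; the only point requiring care is the bridge between the two extensions, namely that decomposition and inertia groups for $N^{(2)}/K$ are the images of those for $\widetilde{K}/K$ and that $\mathbb{Z}_p$-ranks match $\mathbb{Q}_p$-dimensions of images. The substantive content is entirely carried by the two rank statements of Proposition~\ref{key_prop} together with Lemma~\ref{decomp_inertia_rank}; the contradiction hypothesis is exactly what forces $\Gal(\widetilde{K}/N^{(2)})$ to coincide (up to finite index) with $D_2\cap D_3$, at which point part~(ii) becomes incompatible with the ramification of $\mathfrak{P}_1$ in $N^{(2)}/N^{(1)}$.
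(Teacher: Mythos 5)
Your argument is correct and is essentially the paper's own proof in dual language: where you show $W=\Gal(\widetilde{K}/N^{(2)})\otimes\mathbb{Q}_p$ must equal $d_2\cap d_3$, the paper shows $N^{(2)}$ must equal the compositum $N^{\mathfrak{P}_2}N^{\mathfrak{P}_3}$ of the two split $\mathbb{Z}_p$-extensions (the fixed field of $D_2(\widetilde{K}/K)\cap D_3(\widetilde{K}/K)$), and both proofs then derive the same contradiction from Proposition~\ref{key_prop}~(ii) forcing $\zprank I_1(N^{(2)}/K)=2$ against the fact that $N^{(1)}\subseteq N^{(2)}$ is unramified at $\mathfrak{P}_1$. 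The inputs (Lemma~\ref{decomp_inertia_rank}, Proposition~\ref{key_prop}~(i),(ii)) and the logical structure coincide, so this is the same approach.
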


\begin{proof}
Let $N^{\mathfrak{P}_2}/K$ (resp.~$N^{\mathfrak{P}_3}/K$) 
be the unique $\mathbb{Z}_p$-extension 
such that $\mathfrak{P}_2$ (resp.~$\mathfrak{P}_3$) splits completely 
(the uniqueness follows from Proposition \ref{key_prop} (i)).

Assume that $\zprank D_2 (N^{(2)}/K) = \zprank D_3 (N^{(2)}/K) = 1$.
Then both $N^{\mathfrak{P}_2}/K$ and $N^{\mathfrak{P}_3}/K$ 
are intermediate fields of $N^{(2)}/K$. 
Since $N^{\mathfrak{P}_2} \cap N^{\mathfrak{P}_3} /K$ is a finite extension 
(see the proof of Proposition \ref{key_prop} (i)), 
we see that $N^{(2)} = N^{\mathfrak{P}_2} N^{\mathfrak{P}_3}$.

We note that $N^{\mathfrak{P}_2} N^{\mathfrak{P}_3}/K$ is contained in the fixed field of 
$D_2 (\widetilde{K}/K) \cap D_3 (\widetilde{K}/K)$.
Then, by using Lemma \ref{decomp_inertia_rank} and 
Proposition \ref{key_prop} (i), (ii), 
we see that the image of the natural mapping
\[ I_1 (\widetilde{K}/K) \to  
\Gal (\widetilde{K}/K)/ (D_2 (\widetilde{K}/K) \cap D_3 (\widetilde{K}/K)) \]
has $\mathbb{Z}_p$-rank $2$.
This implies that 
$\zprank I_1 (N^{\mathfrak{P}_2} N^{\mathfrak{P}_3}/ K) =2$.

On the other hand, $N^{(1)}$ is an intermediate field of 
$N^{(2)} = N^{\mathfrak{P}_2} N^{\mathfrak{P}_3}$ over $K$,  
and the $\mathbb{Z}_p$-extension $N^{(1)}/K$ is unramified at $\mathfrak{P}_1$.
This is a contradiction.
Then the assertion follows.
\end{proof}

We choose the $\mathbb{Z}_p^{\oplus 3}$-extension $N^{(3)}/N$ depending on the following cases. 
\begin{itemize}
\item[(a)] The prime of $N^{(1)}$ lying above $\mathfrak{P}_2$ is 
finitely decomposed in $N^{(2)}$. 
\item[(b)] The prime of $N^{(1)}$ lying above $\mathfrak{P}_2$ is 
completely decomposed in $N^{(2)}$.
\end{itemize}

For the case (a), we see that 
\[ \zprank D_1 (N^{(2)}/K) = \zprank D_2 (N^{(2)}/K) = 2. \]
By Lemma \ref{decomp_inertia_rank}, we see that $\zprank I_3 (\widetilde{K}/N^{(2)}) = 1$.
Hence we can take a $\mathbb{Z}_p^{\oplus 3}$-extension $N^{(3)}/N$
such that $N^{(3)}/ N^{(2)}$ is unramified outside 
$\{ \mathfrak{P}_1, \mathfrak{P}_2 \}$.

For the case (b), we see that the prime of $N^{(1)}$ lying above $\mathfrak{P}_3$ is 
finitely decomposed in $N^{(2)}$ by Lemma \ref{N2_decomposition}.
Hence
\[ \zprank D_1 (N^{(2)}/K) = \zprank D_3 (N^{(2)}/K) = 2. \]
Similar to the case (a), we can take 
a $\mathbb{Z}_p^{\oplus 3}$-extension $N^{(3)}/N$
such that $N^{(3)}/ N^{(2)}$ is unramified outside 
$\{ \mathfrak{P}_1, \mathfrak{P}_3 \}$.

\begin{prop}
For the $\mathbb{Z}_p^{\oplus 3}$-extension $N^{(3)}/N$ chosen above, 
$X (N^{(3)})$ is pseudo-null as a $\Lambda_{\Gal (N^{(3)}/K)}$-module.
\end{prop}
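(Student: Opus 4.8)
The plan is to deduce this proposition from Proposition B applied to $\mathcal{K} = N^{(3)}$ and $\mathcal{K}' = N^{(2)}$. Since $N^{(3)}/K$ is a $\mathbb{Z}_p^{\oplus 3}$-extension (so $d = 3$) and $N^{(2)}/K$ is an intermediate $\mathbb{Z}_p^{\oplus 2}$-extension, the setup of Proposition B applies directly; moreover the pseudo-nullity of $X(N^{(2)})$ as a $\Lambda_{\Gal(N^{(2)}/K)}$-module has already been established in the previous proposition, which is precisely the hypothesis Proposition B requires on $\mathcal{K}'$. Thus everything reduces to verifying the decomposition condition of Proposition B: for every $i \in \{1,2,3\}$ for which $I_i(N^{(3)}/N^{(2)})$ is nontrivial, one must have $\zprank D_i(N^{(2)}/K) \geq 2$.

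First I would pin down which primes can ramify in $N^{(3)}/N^{(2)}$. By the construction of $N^{(3)}$, in case (a) the extension $N^{(3)}/N^{(2)}$ is unramified outside $\{\mathfrak{P}_1, \mathfrak{P}_2\}$, so $I_3(N^{(3)}/N^{(2)})$ is trivial and the condition need only be checked for $i \in \{1,2\}$; symmetrically, in case (b) the extension is unramified outside $\{\mathfrak{P}_1, \mathfrak{P}_3\}$, so $I_2(N^{(3)}/N^{(2)})$ is trivial and only $i \in \{1,3\}$ are relevant. Next I would invoke the rank computations already recorded in the definition of $N^{(3)}$: in case (a) one has $\zprank D_1(N^{(2)}/K) = \zprank D_2(N^{(2)}/K) = 2$, and in case (b) one has $\zprank D_1(N^{(2)}/K) = \zprank D_3(N^{(2)}/K) = 2$. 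In either case, every index $i$ with nontrivial $I_i(N^{(3)}/N^{(2)})$ satisfies $\zprank D_i(N^{(2)}/K) \geq 2$, so the hypothesis of Proposition B is met and its conclusion gives the pseudo-nullity of $X(N^{(3)})$.

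The genuinely hard inputs are all in hand — the finiteness statements coming from Theorem A, the rank and intersection results of Lemma \ref{decomp_inertia_rank} and Proposition \ref{key_prop}, and the pseudo-nullity of $X(N^{(2)})$ — so the present argument is essentially bookkeeping. The one point that deserves care, and the closest thing to an obstacle, is confirming that cases (a) and (b) are genuinely exhaustive and that in each the relevant decomposition ranks really equal $2$. This is exactly where Lemma \ref{N2_decomposition} is needed: it rules out the configuration $\zprank D_2(N^{(2)}/K) = \zprank D_3(N^{(2)}/K) = 1$, thereby guaranteeing that at least one of $\mathfrak{P}_2, \mathfrak{P}_3$ is finitely decomposed in $N^{(2)}$, which forces the corresponding decomposition rank up to $2$ and lets us always select a legitimate $N^{(3)}$ of the required shape. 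Once that dichotomy is secured, the verification of Proposition B's hypothesis is immediate in both branches.
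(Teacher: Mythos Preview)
Your argument is correct and follows exactly the paper's approach: the paper's proof is the single line ``For either case (a) and (b), we can apply Proposition B for $N^{(3)}/N^{(2)}$,'' and your proposal simply spells out the verification of Proposition B's hypotheses in each case using the rank statements already recorded when $N^{(3)}$ was constructed.
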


\begin{proof}
For either case (a) and (b), we can apply Proposition B for $N^{(3)}/N^{(2)}$.
\end{proof}

Now we shall finish the proof of Theorem \ref{main_thm}.

\begin{proof}[Proof of Theorem \ref{main_thm}]
By Lemma \ref{decomp_inertia_rank}, we see that all of 
$D_1 (N^{(3)}/K)$, $D_3 (N^{(3)}/K)$, 
$D_3 (N^{(3)}/K)$ have $\mathbb{Z}_p$-rank at least $2$ 
(this holds for either choice of $N^{(3)}$).
Then we can also apply Proposition B for $\widetilde{K}/N^{(3)}$.
\end{proof}

\section{Examples}\label{Examples}

We shall give examples which satisfy the conditions of Theorem \ref{main_thm}.

\begin{example}
We put $p=3$.
Let $K$ be the minimal splitting field of $X^3-10$.
We can confirm that (C1) is satisfied.
The class number of $F$ is $1$, hence (C2) is satisfied.
We can also check that $\mathfrak{X}_{\mathfrak{P}_3} (K)$ is trivial.
Thus (C3) is also satisfied.
Then, GGC for $K$ and $p$ holds by Theorem \ref{main_thm}.
\end{example}

\begin{example}
We put $p=3$.
Let $K$ be the minimal splitting field of $X^3-17$.
We can confirm that (C1) and (C2) are satisfied 
(note that the class number of $F$ is $1$).
In this case, the order of $\mathfrak{X}_{\mathfrak{P}_3} (K)$ is $3$.
However, we can also see that $\mathfrak{P}_2$ is not decomposed in 
$M_{\mathfrak{P}_3} (K)$, hence (C3) is satisfied.
Consequently, GGC for $K$ and $p$ also holds for this case by Theorem \ref{main_thm}.
\end{example}

\begin{rem}\label{rem_C1_2}
Let $M_{\mathfrak{p}^*}' (F) / F$ be the maximal abelian pro-$p$ extension 
unramified outside $\mathfrak{p}^*$ and split completely at $\mathfrak{p}$.
To confirm (C1), it is sufficient to show that 
$M_{\mathfrak{p}^*}' (F) / F$ is a finite extension.
Hachimori checked that 
$[M_{\mathfrak{p}^*}' (F) : F] = 9$ for $F = \mathbb{Q} (\sqrt[3]{10})$. 
(See \cite[Section 7.3]{Hachi}. 
He also gave partial results for other pure cubic fields.)
Kataoka also checked \cite[Assumption 3.1]{Kata} for many cases (see \cite[p.630]{Kata}).
His result includes confirming (C1) for the above examples 
(recall Remark \ref{rem_C1}). 
Note that for the examples given in the present paper, all conditions are checked separately 
by using PARI/GP \cite{PARI}.
(In this computation, the author also referred to the data and an idea of 
computation stated in Gras \cite[Appendix A]{Gras}.)
\end{rem}

\bigskip

\begin{flushleft}
Tsuyoshi Itoh \\
Division of Mathematics, 
Education Center,
Faculty of Social Systems Science, \\
Chiba Institute of Technology, \\
2--1--1 Shibazono, Narashino, Chiba, 275--0023, Japan \\
e-mail : \texttt{tsuyoshi.itoh@it-chiba.ac.jp}
\end{flushleft}

\end{document}